\newtheorem{theorem}{Theorem}
\newtheorem{corollary}[theorem]{Corollary}
\newtheorem{definition}[theorem]{Definition}
\newtheorem{example}[theorem]{Example}
\newtheorem{lemma}[theorem]{Lemma}
\newtheorem{notation}[theorem]{Notation}
\newtheorem{proposition}[theorem]{Proposition}
\newtheorem{remark}[theorem]{Remark}
\newenvironment{proof}[1][Proof]{\noindent\textbf{#1.} }{\
\rule{0.5em}{0.5em}}
\def\C{\mathbb{C}}
\def\N{\mathbb{N}}
\def\Q{\mathbb{Q}}
\def\cF{\mathcal{F}}
\def\d{\displaystyle}
\def\vec{\mathbf}
\def\square{\ \rule{0.5em}{0.5em}}
\begin{document}

\title{Effective Differential Nullstellensatz for Ordinary DAE Systems with Constant Coefficients \thanks{Partially supported by the following grants: UBACYT 20020110100063 (2012-2015) / Math-AmSud SIMCA ``Implicit Systems, Modelling and Automatic Control" (2013-2014) / Subsidio para Investigador CONICET 4541/12 (2013) (P.S).}}

\author{Lisi D'Alfonso$^\natural$ \and Gabriela Jeronimo$^\sharp$ \and Pablo Solern\'o$^\sharp$\\[4mm]
{\small $\natural$ Departamento de Ciencias Exactas, Ciclo B\'asico Com\'un, Universidad de Buenos Aires,} \\{\small Ciudad Universitaria, 1428, Buenos Aires, Argentina}\\[2mm]
{\small $\sharp$ Departamento de Matem\'atica and IMAS, UBA-CONICET,} \\{\small Facultad de Ciencias Exactas y Naturales,Universidad de Buenos Aires,}\\ {\small Ciudad Universitaria, 1428, Buenos Aires, Argentina}\\[3mm]
{\small E-mail addresses: lisi@cbc.uba.ar, jeronimo@dm.uba.ar, psolerno@dm.uba.ar}
}
\maketitle

\begin{abstract}
We give upper bounds for the differential Nullstellensatz in the case of ordinary systems of differential algebraic equations over any field of constants $K$ of characteristic $0$.
Let $\vec{x}$ be a set of $n$ differential variables, $\vec{f}$ a finite family of differential polynomials in the ring $K\{\vec{x}\}$ and  $f\in K\{\vec{x}\}$ another polynomial which vanishes at every solution of the differential equation system $\vec{f}=0$ in any differentially closed field containing $K$.
Let $d:=\max\{\deg(\vec{f}), \deg(f)\}$ and $\epsilon:=\max\{2,{\rm{ord}}(\vec{f}), {\rm{ord}}(f)\}$.
We show that $f^M$ belongs to the algebraic ideal generated by the successive derivatives of $\vec{f}$ of order at most $L = (n\epsilon d)^{2^{c(n\epsilon)^3}}$, for a suitable universal constant $c>0$,  and $M=d^{n(\epsilon +L+1)}$. The previously known bounds for $L$ and $M$ are not elementary recursive.

\end{abstract}

\section{Introduction}

In 1890, D. Hilbert states  his famous result, currently known as
Hilbert's Nullstellensatz: if $ k $ is a field and $f_1, \ldots f_s,
h$ are multivariate polynomials such that every zero of the $f_i$'s,
in an algebraic closure of $k$, is a zero of $h$, then some power of
$h$ is a linear combination of the $f_i$'s with polynomial
coefficients. In particular if $f_1, \ldots , f_s$ have no common
zeros, then there exist polynomials $h_1, \ldots h_s$ such that
$1=h_1f_1+\ldots +h_sf_s$.  The classical proofs give
no information about the polynomials $h_i$, for instance, they give
no bound for their degrees. The knowledge of such bounds yields a simple
way of determining whether the algebraic variety $\{f_1=0, \ldots,
f_s=0\}$ is empty.  G. Hermann, in 1925, first addresses this
question in \cite{GH}  where she obtains a bound for the degrees of
the $h_i$'s double exponential in the number of variables. In the
last 25 years, several authors have shown bounds single exponential
in the number of variables  (for a survey of the first results  see \cite{BS91} and for more recent improvements see
\cite{Jelonek05}).

In 1932, J. F. Ritt  in \cite{Ritt32} introduces for the first time
the differential  version of Hilbert's Nullstellensatz in the
ordinary context: {\it{Let $f_1, \ldots f_k, h$ be multivariate
differential polynomials  with coefficients in an ordinary
differential field $\cF$. If every zero of the system $f_1, \ldots,
f_k$ in any extension of $\cF$ is a zero of $h$, then some power of
$h$ is a linear combination of the $f_i$'s and a certain number of
their derivatives, with polynomials as coefficients. In particular
if the $f_i$'s do not have  common zeros, then a combination of
$f_1, \ldots f_k$ and their derivatives of various orders equals
unity.}}

In fact, Ritt considers only  the case of differential polynomials
with coefficients in a differential field $\cF$ of meromorphic
functions in an open set of the complex plane. Later, H.W.
Raudenbusch, in \cite{Raud34}, proves this result for polynomials
with coefficients in any abstract ordinary differential field of
characteristic $0$. In 1952, A. Seidenberg gives a proof for
arbitrary characteristic (see \cite{Seid52}) and,  in 1973, E.
Kolchin, in his book \cite{Kolchin},  proves the generalization of
this result to differential polynomials with coefficients in an
arbitrary, not necessarily ordinary, differential field.

None of the proofs mentioned above gives a constructive method for
obtaining admissible values of the power of the polynomial $h$ that
is a combination of the $f_i$'s and their derivatives, or for the
number of these derivatives.  A  bound for these orders of derivation allows us to work
in a polynomial ring in finitely many variables and invoke the results of the algebraic
Nullstellensatz in order to determine whether or not a differential
system has a solution.  A first step in this direction was given by
R. Cohn in \cite{Cohn}, where he proves the existence of the power of
the polynomial $h$ through a process that it is known to have only a
finite number of steps. In \cite{Seid56},  Seidenberg studies  this
problem in the case of ordinary and partial differential systems, proving the existence of functions in terms of the parameters of the input polynomials which describe the order of the derivatives involved; however, no bounds are explicitly shown there.
 The first known bounds on this subject are given by O.
Golubitzky et al.~in \cite{GKOS}, by means of rewriting techniques.
Their general upper bounds are stated in terms of the Ackermann function and, in particular, they are not primitive recursive (see \cite[Theorem 1]{GKOS}). If the number of derivations is fixed (for instance, in the case of ordinary differential equations), the bounds become primitive recursive; however, they are  not elementary recursive, growing faster than any tower of exponentials of fixed height.

\bigskip

The present paper deals with effective aspects of the {\it ordinary}
differential Nullstellensatz over the field $\C$ of complex numbers or, more generally, over arbitrary fields of constants of characteristic $0$. Our main result, which can be found in
Corollary \ref{nssstrong} (see also Section \ref{general} for the general case) states a doubly exponential upper bound for the number of required differentiations:

\vspace{5mm}

\noindent\textbf{Theorem} \ {\it  Let $\vec{x}:=x_1, \ldots, x_n$
and $\vec{f}:=f_1, \ldots, f_s$ be differential  polynomials in
$\C\{\vec{x}\}$. Suppose that $f\in\C\{\vec{x}\}$ is a differential
polynomial such that every solution of the differential system
$\vec{f}=0$ in any differentially closed field containing $\C$ satisfies also ${f}=0$. Let
$d:=\max\{\deg(\vec{f}), \deg(f)\}$ and
$\epsilon:=\max\{2,{\rm{ord}}(\vec{f}), {\rm{ord}}(f)\}.$
Then $f^{M}\in (\vec{f}, \ldots, \vec{f}^{(L)})$ where $L\le(n\epsilon d)^{2^{c(n\epsilon)^3}}$, for a universal constant $c>0$,  and
$M=d^{n(\epsilon+L+1)}$. 
}

\vspace{5mm}

In particular, this theorem, combined with known degree bounds for the polynomial coefficients in a representation of $1$ as a linear combination of given generators of trivial algebraic ideals, allows the construction of an algorithm
to decide whether a differential system has a solution with a triple
exponential running time. From a different approach, an algorithm
for this decision problem, with similar complexity, can be deduced
as a particular case of the quantifier elimination method for
ordinary differential equations proposed by D. Grigoriev in
\cite{Grigoriev87}.

Our approach focuses mainly on the  consistency problem for \emph{first order
semiexplicit ordinary systems over $\C$}, namely differential systems of the
type of the system (\ref{elsistema}) below. An iterative process of
prolongation and projection, together with several tools from
effective commutative algebra and algebraic geometry, is applied in
such a way that in each step the dimension of the algebraic
constraints decreases until a reduced, zero-dimensional situation is
reached. The bounds for this particular case are computed directly.
Then, by means of a recursive reconstruction, we are able to obtain
a representation of $1$ as an element of the differential ideal
associated to the original system.
 The results for any arbitrary ordinary DAE system over $\C$ are deduced through the classical method of  reduction of order, the algebraic Nullstellensatz, and the Rabinowitsch trick. The generalization from $\C$ to an arbitrary field of constants of characteristic $0$ is achieved by means of standard arguments of field theory.

This paper is organized as follows. In Section
\ref{preliminares} we introduce some basic tools and previous results from effective commutative algebra and algebraic geometry and some basic notions and notations  from differential algebra.
In Section \ref{dim0} we address  the case of semiexplicit systems with   reduced $0$-dimensional algebraic constraints. In Section \ref{ODEarbitrary} we show the process that reduces  the arbitrary dimensional case to the reduced $0$-dimensional one and then  recover the information for the original system. In Section \ref{generalcase}, the general case of an arbitrary ordinary DAE system over $\C$ is considered. Finally, in Section \ref{general} we extend the previous results to any arbitrary base field $K$ of characteristic $0$ considered as a field of constants.

\section{Preliminaries} \label{preliminares}

In this section we recall some definitions and results from effective commutative algebra and algebraic geometry and introduce the notation and basic notions from differential algebra used throughout the paper.

\subsection{Some tools from Effective Commutative Algebra and Algebraic Geometry} \label{basic}

Throughout the paper we will need several results from effective commutative algebra and algebraic geometry. We recall them here in the precise formulations we will use.

Before proceeding, we introduce some notation.  Let $\vec{x}=x_1, \ldots , x_n$ be a set of variables and  $\vec{f}=f_1, \ldots, f_s$ polynomials in $\C[\vec{x}]$.
We will write $V(\vec{f})$ for the algebraic variety in $\C^n$ defined by $\{\vec{f}=0\}=\{x\in \C^{n}: f_1(x)=0, \ldots, f_s(x)=0\}$. If $V\subset \C^n$ is an algebraic variety, $I(V)$ will denote the vanishing ideal of the variety $V$, that is $I(V)=\{f\in \C[\vec{x}]: f(x)=0 \ \forall \, x\in V\}$.

One of the results we will apply is an effective version of the strong Hilbert's Nullstellensatz (see for instance \cite[Theorem 1.3]{Jelonek05}):

\begin{proposition}\label{prop:exprad}
Let $f_1,\dots, f_s\in {\C}[x_1,\dots, x_n]$ be polynomials of degrees bounded by $d$, and let $I=(f_1,\dots, f_s)\subset {\C}[x_1,\dots, x_n]$. Then $(\sqrt{I})\,^{d^n}\subset I$.
\end{proposition}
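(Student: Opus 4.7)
The plan is to prove this via the standard Rabinowitsch reduction combined with an effective form of the weak Hilbert Nullstellensatz. Take an arbitrary $g\in\sqrt{I}$; the goal is to show $g^{d^n}\in I$. Introduce a new variable $y$ and form the ideal $J:=(f_1,\dots,f_s,\,1-yg)\subset \C[x_1,\dots,x_n,y]$. Since $g$ vanishes at every point of $V(I)\subset\C^n$, the variety $V(J)\subset\C^{n+1}$ is empty; by the weak Nullstellensatz, $1\in J$.

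Next, I would apply an effective version of the weak Nullstellensatz to extract an explicit representation
\[
1 \;=\; \sum_{i=1}^s h_i(x,y)\,f_i(x) \;+\; h_0(x,y)\bigl(1-yg(x)\bigr),
\]
where the $y$-degree of each $h_i$ is bounded by some quantity $M$. Substituting $y\mapsto 1/g(x)$ (valid in the localization at $g$) annihilates the last summand, and multiplying through by $g^M$ clears denominators to yield $g^M = \sum_{i=1}^s H_i(x)\,f_i(x)\in I$ with $H_i\in \C[\vec x]$. This is the structural core of the argument.

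The main obstacle is quantitative: obtaining the sharp exponent $M=d^n$. A naive application of Kollár's effective weak Nullstellensatz in $n+1$ variables to $s+1$ polynomials of degree at most $\max(d,\deg g + 1)$ would only give $M$ of order $d^{n+1}$ (or worse, depending on $\deg g$), which does not match the claimed bound. The saving of one factor of $d$ comes from exploiting the very special form of the Rabinowitsch auxiliary $1-yg$ (linear in $y$, with known $x$-support) and the fact that only the $y$-degree, not the total degree, of the representation enters the final exponent after substitution. This asymmetric degree analysis is precisely what is carried out in Kollár's original paper and then sharpened by Jelonek.

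Accordingly, rather than reprove the combinatorial/cohomological estimate underlying the sharp bound, the plan is to invoke the effective strong Nullstellensatz in the form stated as Theorem~1.3 of \cite{Jelonek05}, which already packages the Rabinowitsch reduction together with the optimized $y$-degree control and delivers the clean conclusion $(\sqrt I)^{\,d^n}\subset I$. All other steps (Rabinowitsch, substitution, clearing denominators) are routine once this bound is in hand.
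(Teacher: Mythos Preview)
Your proposal is correct and aligns with the paper's treatment: the paper does not give an independent proof of this proposition at all but simply cites \cite[Theorem 1.3]{Jelonek05}, exactly the reference you ultimately invoke. Your added discussion of the Rabinowitsch reduction and the quantitative obstacle is accurate context, but the operative step in both cases is the same citation.
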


In addition, we will need estimates for the degrees of generators of the (radical) ideal of an affine variety $V\subset \C^n$.

A classical result due to Kronecker \cite{Kronecker1882} states that any algebraic variety in $\C^n$ can be defined by $n+1$ polynomials in $\C[x_1,\dots, x_n]$. Moreover, these $n+1$ polynomials can be chosen to be $\Q$-linear combinations of any finite set of polynomials defining the variety. In \cite[Proposition 3]{Heintz83}, a refined version of Kronecker's theorem is proved for irreducible affine varieties. In this version the degree of the $n+1$ defining polynomials is bounded by the \emph{degree} of the variety. We recall that the degree of an irreducible algebraic variety is defined as the number of points in the intersection of the variety with a generic linear variety of complementary dimension; for an arbitrary algebraic variety, it is defined as the sum of the degrees of its irreducible components (see \cite{Heintz83}).

Kronecker's theorem with degree bounds can be extended straightforwardly to an arbitrary (not necessarily irreducible) algebraic variety $V\subset \C^n$ by considering equations of degree $\deg(C)$ for each irreducible component $C$ of $V$ and multiplying them in order to obtain a finite family of polynomials of degree $\sum_{C} \deg(C)=\deg(V)$ defining $V$:

\begin{proposition}\label{n+1equations}
Let $V\subset \C^n$ be an algebraic variety. Then, there exist $n+1$ polynomials of degrees at most $\deg(V)$ whose set of common zeros in $\C^n$ is $V$.
\end{proposition}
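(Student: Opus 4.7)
The plan is to combine the cited refined Kronecker theorem of Heintz for \emph{irreducible} varieties with the ``$\Q$-linear combinations of any defining family'' version of Kronecker's theorem that is recalled immediately above the proposition statement.

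First I would decompose $V$ into its (finitely many) irreducible components, $V = C_1 \cup \cdots \cup C_r$, so that $\deg(V) = \sum_{i=1}^r \deg(C_i)$ by definition. Applying Heintz's theorem (\cite[Proposition 3]{Heintz83}) to each $C_i$ provides polynomials $g_{i,0}, \ldots, g_{i,n} \in \C[x_1,\ldots,x_n]$ of degrees at most $\deg(C_i)$ with $V(g_{i,0}, \ldots, g_{i,n}) = C_i$.

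Next I would form every product $F_{\vec{j}} := \prod_{i=1}^{r} g_{i,j_i}$ indexed by $\vec{j} = (j_1,\ldots,j_r) \in \{0,\ldots,n\}^r$. Each such product has degree at most $\sum_{i=1}^r \deg(C_i) = \deg(V)$. The key (and essentially the only) thing to check is the set-theoretic identity $V(\{F_{\vec{j}}\}_{\vec{j}}) = V$, and I would prove both inclusions directly: if $x \in C_{i_0}$ for some $i_0$, then $g_{i_0, k}(x) = 0$ for every $k$, so each product $F_{\vec{j}}$ contains a vanishing factor at $x$ and hence $F_{\vec{j}}(x) = 0$; conversely, if $x \notin V$, then for each $i$ one can pick an index $j_i^{\ast}$ with $g_{i, j_i^{\ast}}(x) \neq 0$, and then $F_{(j_1^{\ast}, \ldots, j_r^{\ast})}(x)$ is a product of nonzero complex numbers, hence nonzero.

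Finally, I would invoke the linear-combinations formulation of Kronecker's theorem recalled in the preceding paragraph to replace the finite family $\{F_{\vec{j}}\}$ (which has $(n+1)^r$ elements) by $n+1$ generic $\Q$-linear combinations of its members; these combinations still define $V$ and their degrees remain bounded by $\deg(V)$, giving the statement. The main (minor) obstacle in carrying this out is just the set-theoretic verification in the previous paragraph: the rest is bookkeeping on degrees and an appeal to the already-quoted results.
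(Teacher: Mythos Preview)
Your proposal is correct and follows exactly the approach sketched in the paragraph preceding the proposition: decompose into irreducible components, take Heintz's $n+1$ equations for each component, form products to get a finite family of degree at most $\deg(V)$ defining $V$, and then apply the $\Q$-linear-combination form of Kronecker's theorem to cut down to $n+1$ polynomials. You are simply more explicit than the paper about which products to take (all $(n+1)^r$ of them) and about the set-theoretic verification that they define $V$.
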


In order to obtain upper bounds for the number and degrees of generators of the ideal of $V$,  we will apply the following estimates, which follow from the algorithm for the computation of the radical of an ideal presented in \cite[Section 4]{Laplagne06} (see also \cite{KL91b}, \cite{KL91a}) and estimates for the number and degrees of polynomials involved in Gr\"obner basis computations (see, for instance, \cite{Dube90}, \cite{MM84} and \cite{Giusti84}):

\begin{proposition} \label{radicalgen}
 Let $I=(f_1,\dots, f_s)\subset {\C}[x_1,\dots, x_n]$ be an ideal  generated by $s$ polynomials of degrees at most $d$ that define an algebraic variety of dimension $r$ and let $\nu=\max\{1, r\}$. Then, the radical ideal $\sqrt{I}$ can be generated by $(sd)^{2^{O(\nu n)}}$ polynomials of degrees at most $(sd)^{2^{O(\nu n)}}$.
 \end{proposition}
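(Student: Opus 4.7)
The plan is to invoke the explicit radical-computation procedure described in \cite[Section 4]{Laplagne06} (see also \cite{KL91b,KL91a}) and to track how the classical size estimates for Gr\"obner bases propagate through its steps. Laplagne's algorithm first decomposes $I$ into its equidimensional components $I_0\cap I_1\cap\cdots\cap I_r$, and then, for each pure-dimensional component $I_j$ of dimension $j$, computes its radical by putting the ring in Noether position with respect to $I_j$, extracting the radical of the induced zero-dimensional ideal over the rational function field in $j$ transcendental parameters, and contracting back via a saturation; the final $\sqrt{I}$ is then recovered as the intersection of these $r+1$ pieces.

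The key quantitative ingredient is the Dub\'e--Giusti degree bound for Gr\"obner bases \cite{Dube90,Giusti84}: every reduced Gr\"obner basis of an ideal generated by polynomials of degree at most $D$ in $n$ variables consists of polynomials of degree at most $D^{2^{O(n)}}$, and its cardinality is bounded by the number of monomials of degree up to that bound, hence again $D^{2^{O(n)}}$. Each primitive used in Laplagne's algorithm --- intersection of ideals via elimination, saturation with respect to a polynomial through a Rabinowitsch-type trick, colon and elimination ideals, computation of a maximal independent set of variables, and squarefree reduction in dimension zero --- is realized by a constant number of Gr\"obner basis computations in a polynomial ring with at most $n+1$ variables, and therefore sends generators of degree at most $D$ to generators of degree at most $D^{2^{O(n)}}$, with a matching bound on their number.

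Since Laplagne's algorithm iterates a fixed number of such primitives per dimension being peeled off, for a total of $O(\nu)$ successive Gr\"obner basis steps, and since composing $k$ maps of the form $D\mapsto D^{2^{cn}}$ yields $D\mapsto D^{2^{ckn}}$, starting from $s$ input generators of degree at most $d$ one reaches polynomials of degree and in number both bounded by $(sd)^{2^{O(\nu n)}}$, which is exactly the stated estimate. The main technical obstacle I expect is the bookkeeping needed to guarantee that every intermediate ideal (in particular those produced by saturations against hypersurfaces whose defining equations already have doubly-exponential degree) still fits into the same envelope; this should be essentially automatic, because such a saturation reduces via Rabinowitsch to a single Gr\"obner basis computation in $n+1$ variables, to whose input the Dub\'e--Giusti estimate applies, contributing only to the constant hidden in the outer exponent $2^{O(\nu n)}$.
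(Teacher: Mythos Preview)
The paper does not give a detailed proof of this proposition; it only states that the bound ``follow[s] from the algorithm for the computation of the radical of an ideal presented in \cite[Section 4]{Laplagne06} (see also \cite{KL91b}, \cite{KL91a}) and estimates for the number and degrees of polynomials involved in Gr\"obner basis computations (see, for instance, \cite{Dube90}, \cite{MM84} and \cite{Giusti84}).'' Your proposal follows exactly this route---invoking Laplagne's radical algorithm and propagating the Dub\'e--Giusti degree bounds through its $O(\nu)$ iterations---so it is the same approach the paper indicates, simply spelled out in more detail.
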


Combining Propositions \ref{n+1equations} and \ref{radicalgen}, we have:

\begin{proposition}\label{prop:radbound}
Let $V\subset \C^n$ be an algebraic variety of dimension $r$ and degree $D$ and $\nu=\max\{1, r\}$. Then, the vanishing ideal of $V$ can be generated by $(nD)^{2^{O(\nu n)}}$ polynomials of degrees at most  $(n D)^{2^{O(\nu n)}}$.
\end{proposition}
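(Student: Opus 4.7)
The plan is to combine the two immediately preceding propositions in a completely direct manner; no new ideas are really needed, only a bookkeeping check on the degree bounds.

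First, I would invoke Proposition \ref{n+1equations} to produce polynomials $g_1,\dots,g_{n+1}\in\C[x_1,\dots,x_n]$, each of degree at most $D=\deg(V)$, whose common zero locus in $\C^n$ is exactly $V$. Setting $I:=(g_1,\dots,g_{n+1})$, the classical Nullstellensatz gives $\sqrt{I}=I(V)$, so the problem reduces to bounding the number and degrees of generators of $\sqrt{I}$.

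Next, I would feed these data into Proposition \ref{radicalgen} with parameters $s=n+1$ and $d=D$. Since $V(I)=V$ has dimension $r$, the parameter $\nu$ in Proposition \ref{radicalgen} coincides with the $\nu=\max\{1,r\}$ of the statement. This yields generators of $\sqrt{I}=I(V)$ in number and degree both bounded by
\[
((n+1)D)^{2^{O(\nu n)}}.
\]
Finally, since $n+1\le 2n$, the factor $n+1$ can be replaced by $n$ at the cost of absorbing a constant into the $O(\nu n)$ in the exponent of the exponent, giving the claimed $(nD)^{2^{O(\nu n)}}$ bound both on the number of generators and on their degrees.

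The only thing to be careful about is that the two constants hidden in the $O(\nu n)$ notation of Proposition \ref{radicalgen} and of the final statement agree; but this is immediate because the transformation $(n+1)D\rightsquigarrow nD$ is a multiplicative change of base by at most $2$, which is absorbed in the outer exponent. There is no genuine obstacle: the proof is a one-line composition, and the main content sits in Propositions \ref{n+1equations} and \ref{radicalgen} already cited from the literature.
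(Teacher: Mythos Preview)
Your proposal is correct and follows exactly the approach the paper takes: the paper simply states that Proposition~\ref{prop:radbound} follows by combining Propositions~\ref{n+1equations} and~\ref{radicalgen}, which is precisely what you do.
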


Finally, in order to use the bounds in the previous proposition, we will need to compute upper bounds for the degrees of algebraic varieties. To this end, we will apply the following  B\'ezout type bound, taken from \cite[Proposition 2.3]{HS82}:

\begin{proposition}\label{prop:bezbound} Let $V\subset \C^n$ be an algebraic variety of dimension $r$ and degree $D$, and let $f_1,\dots, f_s\in \C[x_1,\dots, x_n]$ be polynomials of degree at most $d$. Then,
$$\deg(V\cap V(f_1,\dots, f_s))\le D d^r.$$

\end{proposition}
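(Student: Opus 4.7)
The plan is to reduce to the classical B\'ezout inequality for a single hypersurface section of an irreducible variety, and then to iterate this one component and one polynomial at a time.

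First I would decompose $V=\bigcup_j W_j$ into irreducible components, with $\dim W_j = r_j \le r$ and $\sum_j \deg(W_j) = D$. Since the intersection distributes over unions and degree is subadditive on unions of varieties,
$$\deg(V\cap V(f_1,\dots,f_s)) \;\le\; \sum_j \deg\bigl(W_j\cap V(f_1,\dots,f_s)\bigr),$$
so it suffices to establish, for each irreducible $W$ of dimension $r_W\le r$, the bound
$$\deg\bigl(W\cap V(f_1,\dots,f_s)\bigr)\;\le\; \deg(W)\cdot d^{r_W}.$$
Summing over $j$ and using $d^{r_W}\le d^{r}$ (the case $d=0$ being trivial to dispose of separately) then yields the desired inequality $\deg(V\cap V(f_1,\dots,f_s))\le D\,d^r$.

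For the irreducible case I would proceed by induction on $r_W$. If $r_W=0$, $W$ is a point and the bound is immediate. Otherwise, if every $f_i$ vanishes identically on $W$, then $W\cap V(f_1,\dots,f_s)=W$ and the bound is also immediate. Else, pick some $f_i$ not vanishing on $W$; by Krull's Hauptidealsatz every irreducible component of $W\cap V(f_i)$ has dimension exactly $r_W-1$, and the classical B\'ezout inequality for a hypersurface section gives $\deg(W\cap V(f_i))\le \deg(W)\cdot d$. Writing $W\cap V(f_i) = \bigcup_k W'_k$ as a union of irreducible components and applying the inductive hypothesis to each $W'_k$ with the remaining family of polynomials, one obtains
$$\deg\bigl(W\cap V(f_1,\dots,f_s)\bigr)\;\le\; \sum_k \deg(W'_k)\,d^{r_W-1}\;\le\; \deg(W)\cdot d^{r_W},$$
which closes the induction.

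The only nontrivial ingredient is the hypersurface B\'ezout bound $\deg(W\cap V(f))\le \deg(W)\deg(f)$ for $W$ irreducible and $f$ not identically zero on $W$; this is the one place where the definition of degree via intersection with a generic linear variety of complementary dimension has to be used directly. Once this is in hand, the remainder is a routine induction on dimension combined with the additivity of degree on unions of irreducible components, so I expect no real difficulty beyond careful bookkeeping.
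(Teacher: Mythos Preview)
Your argument is correct and is essentially the standard proof of this B\'ezout-type inequality. Note, however, that the paper does not actually prove Proposition~\ref{prop:bezbound}: it simply quotes the result from \cite[Proposition~2.3]{HS82}. The proof in the Heintz--Schnorr reference proceeds in exactly the way you describe---reduce to irreducible $V$, then cut by one hypersurface at a time and iterate, using the single-hypersurface B\'ezout bound $\deg(W\cap V(f))\le \deg(W)\deg(f)$ as the only nontrivial ingredient---so there is nothing to compare.
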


\subsection{Basic notions  from Differential Algebra}

If $\vec{z}:=z_1,\ldots ,z_\alpha$ is a set of $\alpha$ indeterminates,
the ring of differential polynomials is denoted by $\C\{z_1,\ldots ,z_\alpha\}$ or simply
$\C\{\vec{z}\}$ and is defined as the commutative
polynomial ring $ \C[z_j^{(p)}, 1\le j \le \alpha, \ p\in \N_0]$
(in infinitely many indeterminates), with the derivation $\delta(z_j^{(i)}) = z_j^{(i+1)}$, that is,
$z_j^{(i)}$ stands for the $i$th derivative of $z_j$ (as
usual, the first derivatives are also denoted by $\dot z_j$).
We write $\vec{z}^{(p)}:=\{z^{(p)}_1,\ldots,z^{(p)}_\alpha\}$ and
$\vec{z}^{[p]}:=\{\vec{z}^{(i)},\ 0\le i\le p\}$ for every $p\in \N_0$.

For a differential polynomial $h$ lying in the differential
polynomial ring $\C\{ \vec{z}\}$ the \emph{successive total derivatives of $h$} are:
$$\begin{array}{rcl}
h^{(0)}&:=& h \\
h^{(p)}&:=& \displaystyle{\sum_{i\in
\mathbb{N}_0,1\le j\le \alpha} \dfrac{\partial h^{(p-1)}}{\partial
z_j^{(i)}}\ z_j^{(i+1)}}, \quad \hbox{ for } p\ge 1. \qquad
\end{array}$$
The \emph{order of $h\in \C\{\vec{z}\}$ with respect to $z_j$} is  ${\rm ord}(h,z_j) :=
\max\{i \in \N_0 : z_j^{(i)} \hbox{ appears in } h\}$, and the
\emph{order of $h$} is ${\rm ord}(h) := \max\{{\rm ord}(h,z_j) :
1\le j \le \alpha\}$.

Given a finite set of differential polynomials $\vec{h}= h_1,\dots,
h_\beta \in \C\{ \vec{z}\}$, we write $[\vec{h}]$ to denote the smallest
\emph{differential} ideal of $\C\{\vec{z}\}$ containing $\vec{h}$ (i.e.~the
smallest ideal containing the polynomials $\vec{h}$ and all their derivatives
of arbitrary order).
For every $i\in
\N$, we write
$\vec{h}^{(i)}:=h_1^{(i)},\ldots,h_\beta^{(i)}$ and $\vec{h}^{[i]}:=\vec{h},\vec{h}^{(1)},\ldots ,\vec{h}^{(i)}$.

\section{The case of ODE's with $0$-dimensional reduced algebraic constraint}\label{dim0}

\subsection{An introductory case: univariate ODE's}

We start by considering the simple case of trivial \emph{univariate} differential ideals contained in $\C\{x\}$
where $x$ is a single differential variable.

Suppose that the trivial ideal is presented by two generators $\dot{x}-f(x)$ and $g(x)$, without common differential solutions, where $f$ and $g$ are polynomials in $\C[x]$.

Since we assume that there exists a representation of $1$ as a combination of $\dot{x}-f(x)$ and $g(x)$ and suitable derivatives of them, by replacing in such a representation all derivatives $x^{(i)}$ by $0$ for $i\ge 1$, we deduce that the univariate polynomials $f$ and $g$ are relatively prime in $\C[x]$.

Let $1=pf+qg$ be an identity in $\C[x]$; therefore, we have
\begin{equation} \label{single x} 1=-p(x)(\dot{x}-f(x))+q(x)g(x)+p(x)\dot{x}.
\end{equation}

On the other hand, if we \emph{assume that $g$ is square-free}, from a relation $1=a(x)g(x)+b(x)\dfrac{\partial g}{\partial x}(x)$ we deduce $\dot{x}=\dot{x}a(x)g(x)+b(x)\dot{x}\dfrac{\partial g}{\partial x}(x)=\dot{x}a(x)g(x)+b(x)\dot{g}$.
Thus, replacing $\dot{x}$ in (\ref{single x}) we have:
\[ 1=-p(x)(\dot{x}-f(x))+(q(x)+p(x)\dot{x}a(x))g(x)+p(x)b(x)\dot{g}.
\]
In other words, we need at most one derivative of $g$ in order to write $1$ as combination of the derivatives of the generators $\dot{x}-f(x)$ and $g(x)$.

\bigskip

 Let us remark that a similar argument can be applied also if $g$ is not assumed square-free with the aid of the Fa\`a di Bruno formula (see for instance \cite{Johnson02}) which describes each differential polynomial $g^{(i)}$ as a $\Q$-linear combination of products of $x^{(j)}$, $j\le i$, and successive derivatives $\dfrac{{\partial}^kg}{\partial x^k}$ up to order $i$. In this case it is not difficult to show that the maximum number of derivatives of the input equations which allow us to write $1$ can be bounded \textit{a priori} by the smallest $k$ such the first $k$ derivatives of $g$ are relatively prime and, moreover, no derivatives of $\dot{x}-f$ of positive order are needed. Since we do not make use of this result,  we have not included a complete proof here.

\subsection{The multivariate case}\label{multivardim0}

Now we consider the case of an arbitrary number of variables. Suppose that $\vec{x}=x_1,\ldots,x_n$ and $\vec{u}=u_1,\ldots,u_m$ are independent differential variables. Let $\vec{f}=f_1,\ldots,f_n\in \C[\vec{x},\vec{u}]$ and $\vec{g}=g_1,\ldots,g_s\in \C[\vec{x},\vec{u}]$ be polynomials such that the (polynomial) ideal $(\vec{g})\subseteq \C[\vec{x},\vec{u}]$ is \emph{radical and $0$-dimensional}.

Suppose that the differential ideal generated by the $n+s$ polynomials $\dot{\vec{x}}-\vec{f}$ and $\vec{g}$ is the whole differential ring $\C\{\vec{x},\vec{u}\}$ (i.e. $1\in [\dot{\vec{x}}-\vec{f},\vec{g}]$).
The goal of this subsection is to show that the order of derivatives of the generators which allows us to write $1$ as a combination of them is at most $1$ (see Proposition \ref{dimension 0} below).

Under these assumptions, as in the previous section, we deduce that the polynomial ideal $(\vec{f},\vec{g})$ is the ring $\C[\vec{x},\vec{u}]$; hence, we have an algebraic identity $1=\vec{p}\cdot\vec{f}+\vec{q}\cdot\vec{g}$ for suitable $n+s$ polynomials $\vec{p},\vec{q}\in \C[\vec{x},\vec{u}]$. Thus,
\begin{equation} \label{single x u}
1=-\vec{p}\cdot(\dot{\vec{x}}-\vec{f})+\vec{q}\cdot\vec{g}+\vec{p}\cdot \dot{\vec{x}}.
\end{equation}

Since the polynomial ideal $(\vec{g})$ is assumed to be radical and $0$-dimensional, for each variable $x_j$ there exists a nonzero square-free univariate polynomial $h_j\in \C[x_j]$
such that $h_j(x_j)\in (\vec{g})\subseteq \C[\vec{x},\vec{u}]$. The square-freeness of $h_j$ implies that the relation  $1=a_jh_j+b_j\dfrac{\partial h_j}{\partial x_j}$ holds in the ring $\C[x_j]$ for suitable polynomials $a_j,b_j\in \C[x_j]$ and then, after multiplying by $\dot{x_j}$ we obtain the identities \begin{equation} \label{x dot}
\dot{x_j}=a_j\dot{x_j}h_j+b_j\dot{h}_j,\qquad \textrm{for}\ j=1,\ldots,n.\end{equation}
On the other hand, each polynomial ${h}_j$ can be written as a linear combination of the polynomials $\vec{g}$  with coefficients in $\C[\vec{x},\vec{u}]$, which induces by derivation a representation of its derivative $\dot{{h}_j}$ as a linear combination of $\vec{g},\dot{\vec{g}}$ with coefficients in $\C[\vec{x},\vec{u},\dot{\vec{x}},\dot{\vec{u}}]$.
Replacing ${h}_j$ and $\dot{{h}}_j$ in (\ref{x dot}) by these combinations and then replacing $\dot{\vec{x}}$ in (\ref{single x u}), we conclude:
\begin{proposition} \label{dimension 0}
With the previous notations and assumptions we have
\[
 1\in [\dot{\vec{x}}-\vec{f},\vec{g}]\subseteq \C\{ \vec{x},\vec{u}\} \text{\ if and only if\ } 1\in (\dot{\vec{x}}-\vec{f},\vec{g},\dot{\vec{g}})
    \subseteq \C[\vec{x},\vec{u},\dot{\vec{x}},\dot{\vec{u}}].
\]
In other words, in order to obtain a (differential) representation of $1$ it suffices to derive once the algebraic reduced equations $\vec{g}$. $\square$
\end{proposition}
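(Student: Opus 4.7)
The plan is to fill in the computations already sketched in the paragraphs preceding the statement. The reverse implication is immediate: every generator of the polynomial ideal $(\dot{\vec{x}}-\vec{f},\vec{g},\dot{\vec{g}})$ lies in the differential ideal $[\dot{\vec{x}}-\vec{f},\vec{g}]$. For the forward implication, I would organize the argument in three steps.

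First, I would deduce the purely algebraic identity $1=\vec{p}\cdot\vec{f}+\vec{q}\cdot\vec{g}$ in $\C[\vec{x},\vec{u}]$ by mimicking the univariate substitution trick. Starting from any representation of $1$ as a $\C[\vec{x},\vec{u},\dot{\vec{x}},\dot{\vec{u}},\ddot{\vec{x}},\ldots]$-linear combination of the $(\dot{\vec{x}}-\vec{f})^{(i)}$ and $\vec{g}^{(i)}$, I apply the $\C$-algebra homomorphism that fixes $\vec{x},\vec{u}$ and sends every higher derivative $x_j^{(i)}, u_k^{(i)}$ with $i\ge 1$ to $0$. Under this substitution, each $(\dot{\vec{x}}-\vec{f})^{(i)}$ for $i\ge 1$ vanishes (every monomial contains a higher derivative of $\vec{x}$), each $\vec{g}^{(i)}$ for $i\ge 1$ vanishes for the same reason, and the remaining terms reduce to a representation of $1$ as $\vec{p}\cdot\vec{f}+\vec{q}\cdot\vec{g}$ with $\vec{p},\vec{q}\in\C[\vec{x},\vec{u}]$.

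Second, I would rewrite this identity as in equation (\ref{single x u}):
\[
1=-\vec{p}\cdot(\dot{\vec{x}}-\vec{f})+\vec{q}\cdot\vec{g}+\vec{p}\cdot\dot{\vec{x}},
\]
so that it suffices to prove that each $\dot{x_j}$ belongs to the polynomial ideal $(\vec{g},\dot{\vec{g}})$ in $\C[\vec{x},\vec{u},\dot{\vec{x}},\dot{\vec{u}}]$. Here the hypotheses on $(\vec{g})$ come into play: since $(\vec{g})$ is zero-dimensional, its contraction $(\vec{g})\cap \C[x_j]$ is a nonzero ideal in the principal ideal domain $\C[x_j]$; since $(\vec{g})$ is radical, this contraction is itself radical, so it is generated by a squarefree univariate polynomial $h_j(x_j)$.

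Third, from the squarefreeness of $h_j$ I use a Bézout identity $1=a_j h_j+b_j\,\partial h_j/\partial x_j$ in $\C[x_j]$; multiplying by $\dot{x_j}$ and observing that $\dot{h_j}=(\partial h_j/\partial x_j)\dot{x_j}$ yields $\dot{x_j}=a_j h_j\dot{x_j}+b_j\dot{h_j}$, as in (\ref{x dot}). Finally, writing $h_j=\sum_k\alpha_{jk}g_k$ with $\alpha_{jk}\in\C[\vec{x},\vec{u}]$ and differentiating once, I express both $h_j$ and $\dot{h_j}$ as $\C[\vec{x},\vec{u},\dot{\vec{x}},\dot{\vec{u}}]$-linear combinations of $\vec{g}$ and $\dot{\vec{g}}$, which places $\dot{x_j}$ in $(\vec{g},\dot{\vec{g}})$ and completes the proof.

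There is no real obstacle: the argument is essentially a bookkeeping exercise and is laid out before the statement. The only non-mechanical step is the observation in the second paragraph that $(\vec{g})\cap\C[x_j]$ is generated by a squarefree polynomial, which however follows cleanly from the combined radical and zero-dimensional hypotheses together with the fact that $\C[x_j]$ is a PID.
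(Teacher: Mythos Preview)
Your proof is correct and follows exactly the route the paper lays out in the paragraphs preceding the proposition: the substitution of $0$ for all higher derivatives to obtain $1\in(\vec{f},\vec{g})$, the rewriting as in (\ref{single x u}), and the use of the square-free eliminants $h_j$ to absorb each $\dot{x_j}$ into $(\vec{g},\dot{\vec{g}})$. One tiny imprecision: the parenthetical ``every monomial contains a higher derivative of $\vec{x}$'' should read ``of $\vec{x}$ or $\vec{u}$'' (e.g.\ $\dot{g_k}$ contains $\dot{u}$-terms), but your substitution kills those as well, so the conclusion is unaffected.
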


\section{The main case: ODE's with arbitrary algebraic constraint}\label{ODEarbitrary}

We will now consider semiexplicit differential systems with no restrictions on the dimension of the algebraic variety of constraints.

Let $\vec{x}=x_1, \ldots, x_n$ and $\vec{u}=u_1, \ldots, u_m$ be differential variables, and let   $\vec{f}=f_1, \ldots, f_n $ and $\vec{g}=g_1, \ldots, g_s $ be polynomials in $\C[\vec{x}, \vec{u}]$. We consider the differential first order semiexplicit system \begin{equation}\label{elsistema}
{\mathbf{\left\{\begin{array}{rcl}\dot{\vec{x}}-\vec{f}(\vec{x}, \vec{u})&=&0\\\vec{g}(\vec{x}, \vec{u})&=&0\end{array}\right.}}\end{equation}


Suppose that the differential system (\ref{elsistema}) has no solution (or equivalently, $1\in [\dot{\vec{x}}-\vec{f}, \vec{g}]\subseteq \C\{ \vec{x},\vec{u}\})$. Our goal is to find bounds for the order of a representation of $1$ as a combination of the polynomials $\dot{\vec{x}}-\vec{f}, \vec{g}$ and their derivatives. Without loss of generality we will suppose \emph{that the purely algebraic system $\vec{g}=0$ is consistent}, because if it is not, it suffices to write $1$ as a combination of the polynomials $\vec{g}$ and no derivatives are required.

The following theorem will be proved at the end of this section:

\begin{theorem} \label{elteorema} Let $\vec{x}=x_1, \ldots, x_n$ and $\vec{u}=u_1, \ldots, u_m$ be differential variables, and let  $\vec{f}=f_1, \ldots, f_n $ and $\vec{g}=g_1, \ldots, g_s $ be polynomials in $\C[\vec{x}, \vec{u}]$. Let $V\subset \C^{n+m}$ be the variety defined as the set of zeros of the ideal $(\vec{g})$,  $0\le r:=\dim(V)$, $\nu:=\max\{1, r\}$ and  $D$ be an upper bound for the degrees of $\vec{f}$, $\vec{g}$ and $V$. Then,  \[1\in [\dot{\vec{x}}-\vec{f}, \vec{g}]\quad \Longleftrightarrow \quad 1\in (\dot{\vec{x}}-\vec{f},\ldots ,  \vec{x}^{(L+1)}-\vec{f}^{(L)}, \vec{g}, \ldots , \vec{g}^{(L)}),\]
where $L\le ((n+m)D)^{2^{c\nu^2(n+m)}}$ for a universal constant $c>0$.
\end{theorem}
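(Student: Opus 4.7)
The strategy is to reduce Theorem \ref{elteorema} to the reduced zero-dimensional case already handled by Proposition \ref{dimension 0}, by means of an iterative prolongation--projection procedure on the algebraic constraint variety. Starting from $V_0 := V(\vec{g}) \subseteq \C^{n+m}$, any solution $(\vec{x}(t),\vec{u}(t))$ of (\ref{elsistema}) must not only lie in $V_0$ but also satisfy the algebraic consequences of $\dot{\vec{g}} = 0$, $\ddot{\vec{g}} = 0$, \ldots, obtained after substituting $\dot{\vec{x}} = \vec{f}(\vec{x},\vec{u})$. Each such derivative after substitution is a \emph{linear} system in the successive derivatives of $\vec{u}$ with polynomial coefficients in $(\vec{x},\vec{u})$; e.g.\ at the first step, $\nabla_{\vec{x}} \vec{g} \cdot \vec{f} + \nabla_{\vec{u}} \vec{g} \cdot \dot{\vec{u}} = 0$. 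The solvability conditions for these linear systems produce new algebraic constraints on $(\vec{x},\vec{u})$ that strictly refine $V_0$.

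Concretely, I would build a descending chain $V_0 \supseteq V_1 \supseteq V_2 \supseteq \cdots$ where, given $V_k$, one takes generators of $I(V_k)$ via Proposition \ref{prop:radbound}, forms their total derivative modulo $\dot{\vec{x}}-\vec{f}$, and eliminates $\dot{\vec{u}}$ to obtain the defining ideal of $V_{k+1}$. The chain must stabilize at some step $N$ at a \emph{differentially invariant} variety $V_\infty$; on $V_\infty$ the linear constraints on $\dot{\vec{u}}$ are everywhere solvable, so a formal-power-series / lifting argument (working in a differentially closed field containing $\C$) produces a genuine solution of the DAE through any point of $V_\infty$. Since by hypothesis no such solution exists, necessarily $V_\infty = \emptyset$, and so the chain reaches the empty set in finitely many steps; at that terminal stage Proposition \ref{prop:exprad} provides a bounded-degree algebraic representation of $1$ in the accumulated polynomial ideal.

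For the quantitative bound, at each strict refinement $V_{k+1} \subsetneq V_k$ either the dimension drops or it remains constant while the degree drops. Since $\dim V_0 = r$, there are at most $\nu = \max\{1,r\}$ dimension drops, and between them the ``degree drops'' are controlled by the current degree of $V_k$. By Proposition \ref{prop:bezbound} (B\'ezout) the degree of $V_{k+1}$ is polynomially controlled by that of $V_k$ and the degrees of the prolonged equations; the effective generation of $I(V_k)$ from generators of $V_k$ by Proposition \ref{prop:radbound} contributes a factor $2^{O(\nu(n+m))}$ to the exponent. Iterating through $O(\nu)$ dimension rounds accumulates these factors to $2^{O(\nu^2(n+m))}$, which yields the stated bound $((n+m)D)^{2^{c\nu^2(n+m)}}$ on both the number $L$ of prolongations needed and the degrees of the intermediate polynomials.

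Finally, the identity $1 = \sum_i p_i \cdot (\dot{\vec{x}} - \vec{f})^{(j_i)} + \sum_i q_i \cdot \vec{g}^{(k_i)}$ is reconstructed by reverse induction on the chain: at each level $k$ I lift the identity from the level $k+1$ computation, using the explicit elimination formulae that express the generators added to $I(V_{k+1})$ as polynomial combinations of $\vec{g}^{[k+1]}$ modulo $(\dot{\vec{x}}-\vec{f})^{[k]}$, together with Proposition \ref{prop:exprad} to pass from a radical-ideal membership back to an ideal-theoretic one. The hardest part will be the precise, effective control of the elimination/projection step: the rank conditions that ensure solvability of the linear system in $\dot{\vec{u}}$ translate into many new minor equations of controlled but growing degree, and one must check that each $V_{k+1}$ really is a strict subvariety of $V_k$ (in dimension or in degree) so that the chain terminates within the claimed bound. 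Keeping careful bookkeeping of degrees through prolongation, elimination, and radical computation is exactly where the double-exponential factor $2^{c\nu^2(n+m)}$ enters.
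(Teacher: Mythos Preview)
Your overall architecture matches the paper's: build a descending chain of constraint varieties by prolongation--projection, terminate at low dimension, and then climb back up to bound $L$. Two essential ingredients, however, are missing or misidentified.

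First, the paper's key geometric fact (Lemma \ref{bajodim}) is that the dimension \emph{strictly} drops at every step of the chain, so the chain has length $\rho\le r$. You instead allow steps where only the degree drops; but this does not give an $O(\nu)$ bound on the chain length, since the degrees of the intermediate $V_k$ themselves grow doubly exponentially, and your claimed accumulation to $2^{O(\nu^2(n+m))}$ would not follow. Your lifting argument at the stabilization point $V_\infty$ is the right idea, but it must be applied at \emph{every} step: if an irreducible component $C$ of $V_k$ survives into $V_{k+1}$, one can already construct a solution of the DAE through a generic point of $C$, contradicting the hypothesis. The paper proves this analytically---via the Implicit Function Theorem at a smooth point of $C$ and classical ODE existence for the free variables---not via formal power series in a differentially closed field. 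This analytic argument is the genuinely delicate step and cannot be replaced by the vague ``solvability of the linear system in $\dot{\vec u}$ on $V_\infty$''; one also needs to choose carefully which variables to make free so that the resulting ODE solution actually stays on $C$.

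Second, for the reverse induction the paper uses a concrete device (Lemma \ref{cotaks}): given $g\in I_i$ with $g^{\varepsilon_i}\in(\dot{\vec x}-\vec f,\vec g_{i-1},\dot{\vec g}_{i-1})$, one differentiates $g^{\varepsilon_i}$ exactly $j\varepsilon_i$ times and uses Leibniz's formula to isolate $(g^{(j)})^{\varepsilon_i}$ modulo lower derivatives of $g$. This yields the recursion $k_{i-1}\le 1+\varepsilon_i k_i$, and together with the effective-Nullstellensatz bound on $\varepsilon_i$ (Proposition \ref{subengs}) gives $k_0\le(\mu+1)\prod_{i=1}^{\mu}\varepsilon_i$. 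Your ``lift via explicit elimination formulae and Proposition \ref{prop:exprad}'' does not capture this multiplicative recursion, which is exactly where the $\nu^2$ in the exponent comes from: one factor $\nu$ for the length of the chain, and one factor $\nu$ hidden in the accumulated degree governing each $\varepsilon_i$.
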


In what follows we will show how it is possible to obtain a system related to the original inconsistent input system (\ref{elsistema}) but whose algebraic variety of constraints has dimension $0$. To do this, we consider a sequence of auxiliary inconsistent differential systems such that their algebraic constraints define varieties with decreasing dimensions. Once this descending dimension process is done, we will be able to apply the results of  Section \ref{multivardim0}. Finally, we will estimate the order of derivatives of the equations which enable us to write $1$ as an element of the differential ideal by means of an ascending dimension process associated to the same sequence of auxiliary systems.

\subsection{The dimension descending process}

Let us begin by introducing some notation related to the differential part $\dot{\vec{x}}-\vec{f}=0$ of the system (\ref{elsistema}) that will be used throughout this section.

\begin{notation} \label{lastildes} If $\vec{h}=h_1, \ldots, h_\beta$ is a set of polynomials in $\C[\vec{x}, \vec{u}]$, we define \[\widetilde{h}_i:=\d{\sum_{j=1}^n\dfrac{\partial h_i}{\partial x_j}\ f_j+\sum_{k=1}^m\dfrac{\partial h_i}{\partial u_k}\ \dot{u}_k}\in \C[\vec{x}, \vec{u}, \dot{\vec{u}}]\]
for $i=1, \ldots , \beta$. In other words, \[\widetilde{\vec{h}}:=\dfrac{\partial \vec{h}}{\partial \vec{x}}\cdot \vec{f}+\dfrac{\partial \vec{h}}{\partial \vec{u}}\cdot \dot{\vec{u}}.\]
Note that the polynomials $\widetilde{\vec{h}}$ belong to the polynomial ideal $(\dot{\vec{x}}-\vec{f},\dot{\vec{h}})\cap \C[\vec{x},\vec{u},\dot{\vec{u}}]$.

If $I\subset \C[\vec{x},\vec{u}]$ is an ideal, we denote by $\widetilde{I}\subset \C[\vec{x},\vec{u},\dot{\vec{u}}]$ the ideal generated by $I$ and the polynomials $\widetilde{h}$ with $h\in I$.
 Note that if a set of polynomials ${\vec{h}}$ generates the ideal $I$, then the polynomials $\vec{h},\widetilde{\vec{h}}$ generate the ideal $\widetilde{I}$ in $\C[\vec{x},\vec{u},\dot{\vec{u}}]$ and that $\widetilde{I}\subset (\dot{\vec{x}}-\vec{f},\vec{h},\dot{\vec{h}})\cap \C[\vec{x},\vec{u},\dot{\vec{u}}]$.

\end{notation}

The key point to our dimension descending process is the following geometric Lemma.

\begin{lemma} \label{bajodim} Let $\pi:\C^{n+2m}\to\C^{n+m}$ be the projection $(\vec{x}, \vec{u}, \dot{\vec{u}})\mapsto (\vec{x}, \vec{u})$ and suppose that the ideal $(\vec{g})\subset \C[\vec{x}, \vec{u}]$ is radical.

If the system (\ref{elsistema}) has no solution, then no irreducible component of $V(\vec{g})$ is contained in the Zariski closure $\overline{\pi(V(\vec{g}, \widetilde{\vec{g}}))}$ and, in particular, since $\pi(V(\vec{g},\widetilde{\vec{g}}))\subseteq V(\vec{g})$, we have that $\dim \overline{\pi(V(\vec{g}, \widetilde{\vec{g}}))}< \dim V(\vec{g})$.
\end{lemma}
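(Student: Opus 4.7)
The plan is to prove the contrapositive: assuming some irreducible component $C$ of $V(\vec g)$ lies in $\overline{\pi(V(\vec g,\widetilde{\vec g}))}$, I build a solution of (\ref{elsistema}) in a differential extension of $\C$, which directly contradicts $1\in[\dot{\vec x}-\vec f,\vec g]$ by substitution. Set $\mathfrak p:=I(C)$ and $B:=\C[\vec x,\vec u]_\mathfrak p$, so that $K(C)=B/\mathfrak pB$ is the function field of $C$. Since $C$ is an irreducible component of $V(\vec g)$, the prime $\mathfrak p$ is minimal over $(\vec g)$, and combined with the radicality of $(\vec g)$ this yields $(\vec g)B=\mathfrak pB$: the other minimal primes of $(\vec g)$ are not contained in $\mathfrak p$ and become units after localization. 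In particular $\mathfrak pB$ is generated in $B$ by the $s$ polynomials $g_1,\dots,g_s$, a fact that will be crucial below.

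The first substantive step is to produce rational functions $\vec v\in K(C)^m$ with $\widetilde g_i(\vec x,\vec u,\vec v)=0$ in $K(C)$ for every $i$. By Chevalley's theorem the set $\pi(V(\vec g,\widetilde{\vec g}))\subseteq V(\vec g)$ is constructible, and its closure contains the irreducible set $C$; hence it contains a Zariski-dense open $U\subseteq C$. At every $p\in U$ the $s\times m$ affine linear system in the variables $\vec w$
\[
\sum_k\frac{\partial g_i}{\partial u_k}(p)\,w_k=-\sum_j\frac{\partial g_i}{\partial x_j}(p)\,f_j(p),\qquad 1\le i\le s,
\]
is consistent. Because the ranks of the coefficient matrix and of the augmented matrix attain their generic values on dense opens of $C$, their equality on $U$ forces the equality of those generic values; consequently the corresponding linear system over $K(C)$ (with coefficients reduced modulo $\mathfrak p$) is solvable, producing the desired $\vec v$.

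Now lift each $v_k$ to $\widehat v_k\in B$ and define the $\C$-derivation $D$ on $B$ by $D(x_j)=f_j$, $D(u_k)=\widehat v_k$. Then $D(g_i)=\widetilde g_i(\vec x,\vec u,\widehat{\vec v})\in \mathfrak pB$, because modulo $\mathfrak pB$ it equals $\widetilde g_i(\vec x,\vec u,\vec v)=0$. Using that $\mathfrak pB=(g_1,\dots,g_s)B$, Leibniz's rule propagates this to $D(\mathfrak pB)\subseteq\mathfrak pB$, so $D$ descends to a $\C$-derivation $\overline D$ on $K(C)$. The images $\bar x_j,\bar u_k\in K(C)$ satisfy $\overline D(\bar x_j)=f_j(\bar x,\bar u)$ and $g_i(\bar x,\bar u)=0$, providing a solution of (\ref{elsistema}) in the differential field $(K(C),\overline D)$; substituting this solution and its $\overline D$-iterates into any putative representation $1=\sum a_{ij}(\dot x_i-f_i)^{(j)}+\sum b_{kj}g_k^{(j)}$ would force $1=0$ in $K(C)$, a contradiction. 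The dimension claim is then immediate: if $\overline{\pi(V(\vec g,\widetilde{\vec g}))}$ had the same dimension as $V(\vec g)$, one of its top-dimensional irreducible components would be a closed irreducible subset of $V(\vec g)$ of maximal dimension, hence an entire irreducible component of $V(\vec g)$, contradicting what was just proved.

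I expect the main obstacle to be justifying that $D$ descends to a derivation of $K(C)$: the radicality of $(\vec g)$ is used precisely to guarantee that $\mathfrak pB$ is finitely generated by exactly those polynomials $g_1,\dots,g_s$ whose $D$-images have been engineered to lie in $\mathfrak pB$, so that Leibniz's rule can propagate the vanishing. The semicontinuity-of-rank argument that promotes pointwise solvability on $U$ to solvability over the entire field $K(C)$ is also a critical, if more routine, ingredient.
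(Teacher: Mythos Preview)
Your proof is correct and takes a genuinely different route from the paper's. The paper argues analytically: from a point on the putative component $C$ it uses the Implicit Function Theorem to parametrize $C$ locally by free coordinates $(\overline{\vec x},\overline{\vec u})$, then solves an explicit ODE $\dot{\overline{\vec x}}=\overline{\vec\psi}(\overline{\vec x},\overline u_0)$ in a neighborhood of $0$ to produce an honest analytic curve $\Gamma(t)$ in $C$ satisfying (\ref{elsistema}). Your argument is purely algebraic: you localize at $\mathfrak p=I(C)$, use radicality of $(\vec g)$ to identify $\mathfrak pB=(\vec g)B$, solve the affine system $\widetilde{\vec g}=0$ generically over $K(C)$ via a rank--semicontinuity argument, and manufacture a $\C$-derivation on $B$ that stabilizes $\mathfrak pB$ and hence descends to $K(C)$, yielding a solution of (\ref{elsistema}) in that differential field. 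The trade-off is that the paper's method produces a concrete analytic solution but is tied to $\C$ through the Implicit Function Theorem and ODE existence---indeed the authors explicitly remark in Section~\ref{general} that they are unable to prove Lemma~\ref{bajodim} over an arbitrary constant field and must instead transfer the final bound via a field embedding into $\C$. Your argument, by contrast, never leaves commutative algebra and would establish Lemma~\ref{bajodim} itself over any base field of characteristic~$0$, removing the need for that detour.
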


\begin{proof}  Throughout the proof, for a set of variables $\vec{z}$  and a set of polynomials $\vec{h}$  in $\C[\vec{z}]$, if $\overline{\vec{z}}\subset \vec{z}$ and $\overline{\vec{h}}\subset \vec{h}$, we will denote by $\dfrac{\partial \overline{\vec{h}}}{\partial \overline{\vec{z}}}$ the $\#(\overline{\vec{h}})\times \#(\overline{\vec{z}})$ Jacobian matrix of the polynomials  $\overline{\vec{h}}$ with respect to the variables $\overline{\vec{z}}$.

Suppose that there is an irreducible  component $C$ of $V(\vec{g})$ included in $\overline{\pi(V(\vec{g}, \widetilde{\vec{g}})})$. We will construct a solution for the system (\ref{elsistema}).

Since the Zariski algebraic closed set $\overline{{\pi(V(\vec{g}, \widetilde{\vec{g}})})}$ is contained in $V(\vec{g})$, there exists at least one irreducible component $Z$ of $V(\vec{g}, \widetilde{\vec{g}})$ such that $C=\overline{\pi(Z)}$. From Chevalley's Theorem  (see e.g. \cite[Ch.2, \S 6]{Matsumura80}) there exists a  nonempty Zariski open subset $\mathcal{U}$ of $C$ contained in the image $\pi(Z)\subseteq \pi(V(\vec{g}, \widetilde{\vec{g}}))$. Moreover, since the ideal $(\vec{g})$ is assumed to be radical, shrinking  the open set $\mathcal{U}$ if necessary, we may also suppose that all point $p\in \mathcal{U}$ is a regular point of $C$ and then, the Jacobian matrix $\dfrac{\partial \vec{g}}{\partial ({\vec{x},\vec{u}})}(p)$ has rank $n+m-\dim C$.

Similarly, we may suppose also that  for all $p\in \mathcal{U}$ the equality \[{\rm{rk}}\dfrac{\partial \vec{g}}{\partial \vec{u}}(p)=\max\left\{{\rm{rk}}\dfrac{\partial \vec{g}}{\partial \vec{u}}(x, u): (x, u)\in C\right\}\] holds, and that the first columns of $\dfrac{\partial \vec{g}}{\partial \vec{u}}(p)$ are a $\C$-basis of the column space of this matrix.

Set $l:=\textrm{ rk }\dfrac{\partial \vec{g}}{\partial \vec{u}}(p)$.
If $\widehat{\vec{u}}=u_1, \ldots, u_l$ and $\overline{\vec{u}}=u_{l+1}, \ldots, u_m$, then there exists a subset $\widehat{\vec{x}}\subseteq \vec{x}$ of cardinality $k:= n+m-\dim C-l$ such that  ${\rm{rk}}\dfrac{\partial \vec{g}}{\partial (\widehat{\vec{x}}, \widehat{\vec{u}})}(p)={\rm{rk}}\dfrac{\partial \vec{g}}{\partial ({\vec{x},\vec{u}})}(p)=k+l$. For simplicity assume that
$\widehat{\vec{x}}=x_1, \ldots, x_k$. We denote $\overline{\vec{x}}=x_{k+1}, \ldots , x_n$.\\

\noindent \emph{Claim\ }
For every  $p\in \mathcal{U}$ there exists a unique $\widehat{\eta}\in\C^l$ (depending  on $p$) such that $(p, \widehat{\eta}, 0)\in V(\vec{g}, \widetilde{\vec{g}})$.

\bigskip

\noindent \emph{\it Proof of the claim.}
Fix $p\in \mathcal{U}$. Since $\mathcal{U}\subseteq \pi(V(\vec{g}, \widetilde{\vec{g}}))$ there exists $(a, b)\in \C^{l}\times \C^{m-l}$ (not necessarily unique) such that $(p, a, b)\in V(\vec{g}, \widetilde{\vec{g}})$. Thus \[\dfrac{\partial \vec{g}}{\partial \vec{x}}(p)\cdot \vec{f}(p)+\dfrac{\partial \vec{g}}{\partial \widehat{\vec{u}}}(p)\cdot a+\dfrac{\partial \vec{g}}{\partial \overline{\vec{u}}}(p)\cdot b=0.\]
In particular, the linear system in the unknowns $(\vec{y},\vec{z})\in \C^{l}\times \C^{m-l}$: \[\dfrac{\partial \vec{g}}{\partial \widehat{\vec{u}}}(p)\cdot \vec{y}+\dfrac{\partial \vec{g}}{\partial \overline{\vec{u}}}(p)\cdot\vec{z}=- \dfrac{\partial \vec{g}}{\partial \vec{x}}(p)\cdot \vec{f}(p)\] has a solution, or  equivalently, the columns of $\dfrac{\partial \vec{g}}{\partial \vec{x}}(p)\cdot \vec{f}(p)$ belong to the linear subspace generated by the columns of the matrix $\dfrac{\partial \vec{g}}{\partial {\vec{u}}}(p)$. By our choice of the variables $\widehat{\vec{u}}$,  the columns of the matrix $\dfrac{\partial \vec{g}}{\partial \widehat{\vec{u}}}(p)$ are a basis of this subspace.
Then, the linear system $\dfrac{\partial \vec{g}}{\partial \widehat{\vec{u}}}(p)\cdot {\vec{y}}=-\dfrac{\partial \vec{g}}{\partial \vec{x}}(p)\cdot \vec{f}(p)$ has a unique solution $\widehat{\eta}\in \C^{l}$, which means that $(p, \widehat{\eta}, 0)\in V(\vec{g}, \widetilde{\vec{g}})$. This finishes the proof of the claim.

\bigskip

 Now we go back to the proof of the Lemma. We are looking for a solution of the system (\ref{elsistema}) when the irreducible component $C$ of the variety $V(\vec{g})$ lies in $\overline{\pi(V(\vec{g}, \widetilde{\vec{g}}))}$.

 Fix a point $(x_0, u_0)=(\widehat{x}_0, \overline{x}_0, \widehat{u}_0, \overline{u}_0)\in \mathcal{U}$. From the Implicit Function Theorem around $(x_0,u_0)$, shrinking the open set $\mathcal{U}$ in the strong topology if necessary, there exists a neighborhood $\mathcal{V}_0\subset \C^{(n-k)+(m-l)}$ of the point $(\overline{x}_0,\overline{u}_0)$ (also in the strong topology) and differentiable functions $\varphi_{1}:\mathcal{V}_0\to \C^{k}$ and $\varphi_{2}:\mathcal{V}_0\to \C^{l}$ such that for any $(x, u)=(\widehat{x}, \overline{x}, \widehat{u}, \overline{u})\in \mathcal{U}$, the equality  $(\widehat{x}, \widehat{u})=(\varphi_1(\overline{x}, \overline{u}),\varphi_2(\overline{x}, \overline{u}))$ holds and in particular we have
 \begin{equation} \label{impli}
(x,u)=(\varphi_1(\overline{x}, \overline{u}), \overline{x},\varphi_2(\overline{x}, \overline{u}),\overline{u}))\quad \textrm{for all}\ (x,u)\in \mathcal{U}.
 \end{equation}
For $i=k+1, \ldots,n$, we write  $\overline{{\psi}_i}(\overline{\vec{x}}, \overline{\vec{u}})=f_i(\varphi_1(\overline{\vec{x}}, \overline{\vec{u}}), \overline{\vec{x}}, \varphi_2(\overline{\vec{x}}, \overline{\vec{u}}), \overline{\vec{u}})$ and $\overline{\vec{\psi}}=\overline{\psi}_{k+1}, \ldots, \overline{\psi}_n$.

Let us consider the following ODE with initial condition in the $n-k$ unknowns $\overline{\vec{x}}$:
\begin{equation}\label{sistematilde} {\bf{\left\{\begin{array}{rcl}
\dot{\overline{\vec{x}}}\ \ &=&\overline{\vec{\psi}}(\overline{\vec{x}}, \overline{u}_0)\\\overline{\vec{x}}(0)&=&
\overline{x}_0\end{array}\right.}}\end{equation}
and let ${\gamma}(t)=({\gamma}_{k+1}(t), \ldots, {\gamma}_n(t))$ be a solution of the system  (\ref{sistematilde}) in a neighborhood of $0$.

From this solution $\gamma$ we define \[\Gamma(t)=(\varphi_1(\gamma(t),  \overline{u}_0), \gamma(t),  \varphi_2(\gamma(t), \overline{u}_0), \overline{u}_0)\]
in a neighborhood of $0$. From (\ref{sistematilde}) we have that $(\gamma(0),\overline{u}_0)=(\overline{x}_0,\overline{u}_0)$; therefore, the continuity of $\gamma$ ensures that for all $t$ in a neighborhood of $0$, $(\gamma(t),\overline{u}_0)$ belongs to the open set $\mathcal{V}_0$ and then $\Gamma$ is well defined.

It suffices to prove that $\Gamma(t)$ is a solution of the original system (\ref{elsistema}), which leads to a contradiction since that system has no solution.

First of all, by (\ref{impli}) we deduce that $\Gamma(t)\in \mathcal{U}\subseteq C$ for all $t$ small enough and so, $\vec{g}(\Gamma(t))=0$. In other words, $\Gamma(t)$ satisfies the algebraic constraint of the system (\ref{elsistema}).
In order to show that $\Gamma(t)$ also satisfies the differential part of (\ref{elsistema}) we observe that
its coordinates $k+1,\ldots,n$ are simply $\gamma(t)$, which satisfy the differential relations:
\[
\dfrac{d}{dt}(\gamma_i(t))=\overline{\psi}_i(\gamma(t),\overline{u}_0)
\]
for $i=k+1,\ldots,n$. Since $\overline{\psi}_i(\gamma(t),\overline{u}_0)=f_i(\Gamma(t))$, we conclude that $\Gamma$ satisfies the last $n-k$ differential equations of (\ref{elsistema}).
It remains to show that it also satisfies the first $k$ differential equations of (\ref{elsistema}).

Taking the derivative with respect to the single variable $t$ in the identity $\vec{g}(\Gamma(t))=0$ we obtain:
\[\dfrac{\partial \vec{g}}{\partial \widehat{\vec{x}}}(\Gamma(t))\cdot \dfrac{d}{dt}(\varphi_1(\gamma(t),\overline{u}_0))+\dfrac{\partial \vec{g}}{\partial \overline{\vec{x}}}(\Gamma(t))\cdot \dfrac{d}{dt}({\gamma(t)})+\dfrac{\partial \vec{g}}{\partial \widehat{\vec{u}}}(\Gamma(t))\cdot \dfrac{d}{dt}(\varphi_2(\gamma(t), \overline{u}_0))=0,\]
and then
\begin{equation}\label{gamma1}-\dfrac{\partial \vec{g}}{\partial \overline{\vec{x}}}(\Gamma(t))\cdot \dfrac{d}{dt}({\gamma(t)})=\dfrac{\partial \vec{g}}{\partial (\widehat{\vec{x}},\widehat{\vec{u}})}(\Gamma(t))
\cdot \dfrac{d}{dt}(\varphi(\gamma(t),\overline{u}_0)),\end{equation}
where $\varphi=(\varphi_1,\varphi_2)$.

On the other hand, since for all $t$ in a neighborhood of $0$ we have $\Gamma(t)\in \mathcal{U}\subseteq \pi(V(\vec{g}, \widetilde{\vec{g}}))$, the previous {\emph {Claim}} implies that there exist a unique ${\widehat{\eta}}(t)\in \C^l$ such that $(\Gamma(t), {\widehat{\eta}}(t), 0)\in V(\vec{g}, \widetilde{\vec{g}})$ and then, if we write $\widehat{\vec{f}}=f_1, \ldots , f_k$ and $\overline{\vec{f}}=f_{k+1}, \ldots, f_n$,
\[\dfrac{\partial \vec{g}}{\partial \widehat{\vec{x}}}(\Gamma(t))\cdot \widehat{\vec{f}}(\Gamma(t))+\dfrac{\partial \vec{g}}{\partial \overline{\vec{x}}}(\Gamma(t))\cdot \overline{\vec{f}}(\Gamma(t))+\dfrac{\partial \vec{g}}{\partial \widehat{\vec{u}}}(\Gamma(t))\cdot {\widehat{\eta}}(t)=0.\]
Hence,
\begin{equation}\label{gamma2}-\dfrac{\partial \vec{g}}{\partial \overline{\vec{x}}}(\Gamma(t))\cdot \overline{\vec{f}}(\Gamma(t))=\dfrac{\partial \vec{g}}{\partial (\widehat{\vec{x}},\widehat{\vec{u}})}(\Gamma(t))\cdot (\widehat{\vec{f}}(\Gamma(t)), {\widehat{\eta}}(t)).\end{equation}

Since $\dfrac{d}{dt}({\gamma(t)})=\overline{\psi}(\gamma(t),\overline{u}_0)= \overline{\vec{f}}(\Gamma(t))$ and the matrix $\dfrac{\partial \vec{g}}{\partial (\widehat{\vec{x}},\widehat{\vec{u}})}(\Gamma(t))$ has a $(k+l)\times(k+l)$ invertible minor, comparing (\ref{gamma1}) and (\ref{gamma2}), we infer that
\[\dfrac{d}{dt}(\varphi(\gamma(t),\overline{u}_0))=
(\widehat{\vec{f}}(\Gamma(t)), {\widehat{\eta}}(t)).\]
In particular $\dfrac{d}{dt}(\varphi_1(\gamma(t),\overline{u}_0))=
\widehat{\vec{f}}(\Gamma(t))$. Thus $\Gamma$ verifies also the first $k$ differential equations in (\ref{elsistema}) and the lemma is proved. \end{proof}\\

Before we begin the descending process, let us remark that the new differential system induced by the construction underlying Lemma \ref{bajodim} trivially inherits the inconsistency from the input system (\ref{elsistema}):

\begin{remark} \label{sistemamenor} Let notations be as in Lemma
\ref{bajodim} and  $\vec{g}_1$ be a set of generators of the
(radical) ideal $I(\overline{\pi(V(\vec{g},
\widetilde{\vec{g}}))})$. If the system (\ref{elsistema}) has no
solution, neither does the differential system
$$\left\{\begin{array}{rcl}\dot{\vec{x}}-\vec{f}(\vec{x},
\vec{u})&=&0\\\vec{g}_1(\vec{x}, \vec{u})&=&0\end{array}\right. .$$
This is a consequence of the inclusion of algebraic ideals
$(\vec{g})\subset \sqrt{(\vec{g})}\subset (\vec{g}_1)$, which
implies the inclusion of differential ideals $[\vec{g}]\subset
[\vec{g}_1]$; hence, $1\in [\dot{\vec{x}}-\vec{f}, \vec{g}_1]$.
\end{remark}

We will now begin to present the descending dimension process induced by Lemma \ref{bajodim}.

\begin{definition} \label{lasgis} From the
system (\ref{elsistema}), we define recursively an increasing chain
of radical ideals $I_0\subset I_1\subset\cdots $ in the polynomial
ring $\C[\vec{x},\vec{u}]$ as follows:
\begin{itemize}
\item $I_0=\sqrt {(\vec{g})}$.
\item  Assuming that $I_i$ is defined, consider the ideal
$\widetilde{I}_{i}\subseteq \C[\vec{x}, \vec{u}, \dot{\vec{u}}]$
introduced in Notation \ref{lastildes} and suppose that $1\notin
\widetilde{I}_i$. We define $I_{i+1} =\sqrt{\widetilde{I}_i \cap
\C[\vec{x}, \vec{u}]}$.
\end{itemize}
\end{definition}

Let us observe some basic facts about this definition. First, note
that $I_0=I(V(\vec{g}))$ and, for $i\ge 1$, if $\pi:\C^{n+2m}\to
\C^{n+m}$ is the projection $(\vec{x}, \vec{u},
\dot{\vec{u}})\mapsto (\vec{x}, \vec{u})$, then
$I_{i+1}=I(\overline{\pi(V(\widetilde{I}_i))})$ (we have that
$V(\widetilde{I}_i)\subset \C^{n+2m}$ is nonempty since we assume
$1\notin \widetilde{I}_i$).
Secondly, from Lemma \ref{bajodim}, 
it follows  that the chain of ideals defined is strictly increasing
since the inequality $\dim(V(I_{i+1}))<\dim(V(I_i))$ holds. We can
estimate the length of this chain: if we define
\[\rho:=\min\{i\in \mathbb{Z}_{\ge 0} : \dim (V(I_i))\le 0\}\]
we have that $0\le \rho \le r$ (recall that we assume that the ideal $I_0$ is a
proper ideal with $\dim(V(I_0))=r\ge 0$). Notice also that if
$\rho>0$, then $\dim (V(I_\rho))=0$ or $-1$ and $\dim (V(I_{\rho-1}))>0$.\\

 Any system of generators $\vec{g}_\rho$ of the last ideal $I_\rho$ allows us to exhibit a new ODE system, related to the original one, with no solutions and such that $1$ can be easily written  as a combination of the generators of the associated differential ideal. More precisely,

\begin{proposition} \label{con0dim} Fix $i=0,\ldots,\rho$ and let $\vec{g}_{i}\subset \C[\vec{x},\vec{u}]$ be any system of generators of the  radical ideal $I_i$. Then the DAE system \[\left\{\begin{array}{rcl}\dot{\vec{x}}-\vec{f}(\vec{x}, \vec{u})&=&0\\ \vec{g}_{i}(\vec{x},\vec{u})&=&0\end{array}\right. \]
has no solution.
In the particular case $i=\rho$, we also  have that $1$ belongs to the polynomial ideal $(\dot{\vec{x}}-\vec{f}, \vec{g}_{\rho}, \dot{\vec{g}}_{\rho})\subset \C[\vec{x}, \dot{\vec{x}}, \vec{u}, \dot{\vec{u}}]$.
\end{proposition}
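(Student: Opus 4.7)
The plan is to prove the first assertion by induction on $i$ and then obtain the second assertion for $i=\rho$ by invoking Proposition \ref{dimension 0}, after handling separately the degenerate case $V(I_\rho)=\emptyset$.

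For the inductive part, the base case $i=0$ is immediate: since $I_0=\sqrt{(\vec{g})}$, we have $V(\vec{g}_0)=V(\vec{g})$, so any solution of the system with constraints $\vec{g}_0$ would be a solution of (\ref{elsistema}), which is inconsistent by assumption. For the inductive step, assume the DAE system with constraints $\vec{g}_i$ has no solution. I would first observe that for any set of generators $\vec{g}_i$ of $I_i$, the polynomials $\vec{g}_i,\widetilde{\vec{g}_i}$ generate $\widetilde{I}_i$ in $\C[\vec{x},\vec{u},\dot{\vec{u}}]$ (this is stated in Notation \ref{lastildes}). Then, by standard elimination theory,
\[
I(\overline{\pi(V(\vec{g}_i,\widetilde{\vec{g}_i}))})=\sqrt{\widetilde{I}_i\cap \C[\vec{x},\vec{u}]}=I_{i+1}.
\]
Note that the inductive hypothesis ensures the ideal $(\vec{g}_i)=I_i$ is radical, so Lemma \ref{bajodim} and Remark \ref{sistemamenor} apply to the system with constraints $\vec{g}_i$. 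Remark \ref{sistemamenor} then guarantees that the DAE system with constraints given by any generating set of $I_{i+1}$ (in particular $\vec{g}_{i+1}$) has no solution either, completing the induction.

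For the second assertion, with $i=\rho$, I split into two cases according to whether $V(I_\rho)$ is empty or $0$-dimensional. If $V(I_\rho)=\emptyset$, then by the algebraic Nullstellensatz $1\in I_\rho=(\vec{g}_\rho)$, hence trivially $1\in (\dot{\vec{x}}-\vec{f},\vec{g}_\rho,\dot{\vec{g}}_\rho)$. If $\dim(V(I_\rho))=0$, then $I_\rho=(\vec{g}_\rho)$ is a radical zero-dimensional ideal, so the hypotheses of Proposition \ref{dimension 0} are satisfied. By the first part of the present proposition, the DAE system with constraints $\vec{g}_\rho$ has no solution, which translates (via the abstract differential Nullstellensatz recalled in the introduction to Section \ref{ODEarbitrary}) to $1\in [\dot{\vec{x}}-\vec{f},\vec{g}_\rho]$. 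Proposition \ref{dimension 0} then yields $1\in (\dot{\vec{x}}-\vec{f},\vec{g}_\rho,\dot{\vec{g}}_\rho)$, as required.

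The main subtlety I anticipate is the bookkeeping needed to apply Remark \ref{sistemamenor} correctly at each inductive step: one must check that the radicality hypothesis needed by Lemma \ref{bajodim} is inherited from the definition of the $I_i$'s (this is built into Definition \ref{lasgis}, since each $I_i$ is defined as a radical), and that the ideal $I_{i+1}$ really coincides with $I(\overline{\pi(V(\widetilde{I}_i))})$ irrespective of the chosen generating set $\vec{g}_i$. Beyond that, the argument is essentially a formal consequence of the preceding lemma, the remark, and the $0$-dimensional reduction already settled in Section \ref{multivardim0}.
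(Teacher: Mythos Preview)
Your proposal is correct and follows essentially the same approach as the paper: iterate Remark \ref{sistemamenor} (via an induction on $i$) to get the first assertion, then for $i=\rho$ split into the empty and the $0$-dimensional cases and invoke Proposition \ref{dimension 0}. Your write-up is simply more explicit about the bookkeeping (radicality of $I_i$, independence of the chosen generators, the identification $I_{i+1}=I(\overline{\pi(V(\widetilde{I}_i))})$); note in passing that since Remark \ref{sistemamenor} already concludes $1\in[\dot{\vec{x}}-\vec{f},\vec{g}_{i+1}]$, you do not actually need a separate appeal to the abstract differential Nullstellensatz at the end.
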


\begin{proof}
The first assertion follows from the iterated application of Remark \ref{sistemamenor}. If $i=\rho$ and $\dim(I_\rho)=-1$ (i.e. if $I_\rho=\C[\vec{x},\vec{u}]$) we have $1\in (\vec{g}_\rho)\subset (\dot{\vec{x}}-\vec{f}, \vec{g}_{\rho}, \dot{\vec{g}}_{\rho})$. Otherwise, if $\dim(I_\rho)=0$, since the ideal $I_{\rho}=(\vec{g}_\rho)$ is radical, the Proposition follows from the $0$-dimensional case considered in Proposition \ref{dimension 0}.
\end{proof}\\

In the following Lemma  we will estimate bounds for the degree and the number of polynomials in suitable families  $\vec{g}_i$ which generate the ideals $I_i$, for each $i=1,\ldots,\rho$. It will be a consequence of Proposition \ref{prop:radbound}.

\begin{lemma}\label{gradogis} Consider the DAE system (\ref{elsistema}), and let $r:=\dim(V(\vec{g}))$,  $\nu:=\max\{1, r\}$ and $D:=\max\{\deg(\vec{f}), \deg(\vec{g}),\deg(V(\vec{g}))\}$. There exists a universal constant $c>0$ such that for each $0\le i\le \rho$, the ideal $I_i$ can be generated by a family of polynomials $\vec{g}_i$ whose number and degrees are bounded by $((n+m)D)^{2^{c(i+1)\nu (n+m)}}$. Moreover, if $r>0$, this is also an upper bound for the degrees of the polynomials $\widetilde{\vec{g}}_i$.
\end{lemma}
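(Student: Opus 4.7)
The proof will be by induction on $i$, with $B_i:=((n+m)D)^{2^{c(i+1)\nu(n+m)}}$, and $c$ chosen universal and large enough at the end. The key observation throughout is that the dimension of $V(I_i)$ is at most $r$ (since $I_0\subset I_i$ gives $V(I_i)\subset V(\vec{g})$), so the parameter $\max\{1,\dim V(I_i)\}\le\nu$ stays uniformly bounded in the induction.

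\textbf{Base case.} Since $I_0=I(V(\vec{g}))$, the variety $V(\vec{g})\subset \C^{n+m}$ has dimension $r$ and degree at most $D$. Proposition \ref{prop:radbound} then gives generators of $I_0$ in number and degree bounded by $((n+m)D)^{2^{O(\nu(n+m))}}$, which is $\le B_0$ for $c$ large enough. For the ``moreover'' clause, the chain-rule definition of $\widetilde{g}_{0,k}$ immediately yields $\deg(\widetilde{g}_{0,k})\le \deg(g_{0,k})+D-1\le 2B_0$; increasing the universal constant $c$ by $1$ absorbs this factor of $2$, because $B_i(c+1)=B_i(c)^{2^{(i+1)\nu(n+m)}}\ge B_i(c)^2$.

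\textbf{Inductive step.} Assume the claim at level $i<\rho$. The ideal $\widetilde{I}_i\subset\C[\vec{x},\vec{u},\dot{\vec{u}}]$ is generated by $\vec{g}_i\cup \widetilde{\vec{g}}_i$, all of degree $\le B_i$. Applying Proposition \ref{prop:bezbound} with ambient $\C^{n+2m}$ (of degree $1$ and dimension $n+2m$) gives $\deg V(\widetilde{I}_i)\le B_i^{n+2m}$. Set $W_{i+1}:=\overline{\pi(V(\widetilde{I}_i))}\subset \C^{n+m}$. Since linear projection does not increase the degree (apply it componentwise to each irreducible component and sum), $\deg W_{i+1}\le B_i^{n+2m}$; and $W_{i+1}\subset V(I_i)\subset V(\vec{g})$ gives $\dim W_{i+1}\le r$. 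Proposition \ref{prop:radbound} applied to $W_{i+1}$ produces generators of $I_{i+1}=I(W_{i+1})$ in number and degree at most
\[((n+m)\cdot B_i^{n+2m})^{2^{O(\nu(n+m))}}\le B_i^{(n+2m+1)\cdot 2^{O(\nu(n+m))}}=((n+m)D)^{(n+2m+1)\cdot 2^{O(\nu(n+m))+c(i+1)\nu(n+m)}}.\]
The final chain-rule bound on $\widetilde{\vec{g}}_{i+1}$ is then handled exactly as in the base case.

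\textbf{Main obstacle.} The real work is purely in the exponent bookkeeping of the last display: one must verify that
\[(n+2m+1)\cdot 2^{O(\nu(n+m))+c(i+1)\nu(n+m)}\le 2^{c(i+2)\nu(n+m)},\]
i.e.~that $\log_2(n+2m+1)+O(\nu(n+m))\le c\,\nu(n+m)$. Since the left-hand side is $O(\nu(n+m))$ with a universal implied constant, a single universal $c$ works uniformly in $n,m,i,D$. Geometrically nothing beyond the three quoted tools (effective radical membership, effective ideal of a variety, and the B\'ezout bound for intersections) is needed; the content of the induction is that the descending dimension process does not blow up the parameter $\nu$, so each round of (prolongation, intersection with $\widetilde{\vec{g}}$, projection, radical) only multiplies the doubly-exponential bound by a fixed factor $2^{c\nu(n+m)}$ in the outer exponent.
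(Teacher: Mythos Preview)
Your proof is correct and follows essentially the same inductive strategy as the paper: bound generators of $I_i$ via Proposition~\ref{prop:radbound}, control $\deg V(\widetilde{I}_i)$ via the B\'ezout inequality of Proposition~\ref{prop:bezbound}, use that projection does not increase degree, and absorb all overhead into the universal constant $c$.

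The one genuine difference is in how the B\'ezout step is set up. You intersect the full ambient $\C^{n+2m}$ with the generators $\vec{g}_i,\widetilde{\vec{g}}_i$, picking up an exponent $n+2m$. The paper instead starts from the cylinder $V(I_i)\times\C^m$ (degree $D_i$) and intersects only with $\widetilde{\vec{g}}_i$, so their B\'ezout exponent is $r_i$ (the dimension over the base), giving the sharper recursion $D_{i+1}\le D_i\,\delta_i^{r_i}$. Your cruder bound $B_i^{n+2m}$ costs you a factor of roughly $n+2m$ in the logarithm of the outer exponent at each step, but as you correctly argue this is $O(\nu(n+m))$ and is swallowed by enlarging $c$. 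So the paper's bookkeeping is tighter step-by-step, while yours is more robust (you never need to track the descending dimensions $r_i$ individually, only the uniform bound $\dim V(I_i)\le r\le\nu$), and both land on the same final shape $((n+m)D)^{2^{c(i+1)\nu(n+m)}}$.
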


\begin{proof}
First, notice that, if $r=0$, then $\rho=0$. Since $\vec{g}_0$ is a set of generators of $\sqrt{(\vec{g})}$, the bound is a direct consequence of Proposition \ref{prop:radbound}. Let us now suppose that $r>0$.

From Proposition \ref{prop:radbound}, since $V(\vec{g})\subset
\C^{n+m}$ is an algebraic variety of dimension $r_0:=r$ and degree
at most $D_0:= D$, the radical ideal $I_0=I(V(\vec{g}))$ can be generated
by a set $\vec{g}_0$ of $\delta_0:=((n+m)D)^{2^{c_0 r_0(n+m)}}$
polynomials of degrees at most $\delta_0$, where $c_0$ is an
adequate positive constant. By modifying the constant $c_0$ if
necessary, we may suppose that $\delta_0$ is also an upper bound for
the degrees of the polynomials  $\widetilde{\vec{g}}_0$ introduced
in Notation \ref{lastildes}.

Following \cite[Lemma 2]{Heintz83}, the degree of the
variety $\overline{\pi(V(\widetilde{I}_0))}=
\overline{\pi(V(\vec{g}_0, \widetilde{\vec{g}}_0))}$ is at most $
\deg(V(\vec{g}_0, \widetilde{\vec{g}}_0))$ and then, from
Proposition \ref{prop:bezbound} and the previous bounds we infer that
\[\deg(\overline{\pi(V(\widetilde{I}_0))})\le \deg(V(\vec{g}_0, \widetilde{\vec{g}}_0))\le D_0 \delta_0^{r_0} = (n+m)^{r_0 2^{c_0 r_0 (n+m)}} D^{1+r_0 2^{c_0 r_0 (n+m)}}=:D_1.\]
Applying again the estimate stated in Proposition
\ref{prop:radbound}, we have that the ideal
$I_1=I(\overline{\pi(V(\widetilde{I}_0)})$ can be generated by a set
$\vec{g}_1$ consisting of at most \[\delta_1:=((n+m)D_1)^{2^{c_0
r_1 (n+m)}} = ((n+m)D)^{2^{c_0r_1(n+m)}(1+r_0 2^{c_0
r_0(n+m)})}\] polynomials of degrees bounded by $\delta_1$, where
$r_1:= \dim(V(I_1))<r_0$. By the choice of $c_0$, $\delta_1$ is also an upper bound for the degrees of the polynomials $\widetilde{\vec{g}}_1$.

Proceeding in the same way, it can be proved inductively that, for
every $0\le i \le \rho-1$,
\[\deg(\overline{\pi(V(\widetilde{I}_i))})\le\deg (V(\vec{g}_i,\widetilde{\vec{g}}_i))\le
D_i \delta_i^{r_i} = \frac{1}{n+m} ((n+m) D)^{\prod_{j=0}^{i}
(1+r_j 2^{c_0 r_j (n+m)})}=:D_{i+1}\]
and, therefore, the radical ideal $I_{i+1}=I(\overline{
\pi(V(\widetilde{I}_i))})$ can be generated by a
system of polynomials $\vec{g}_{i+1}$ whose number and degrees are
bounded by
\[\delta_{i+1}:=((n+m) D_{i+1})^{2^{c_0r_{i+1}(n+m)}} = ((n+m) D)^{2^{c_0 r_{i+1}(n+m)}\prod_{j=0}^{i} (1+r_j 2^{c_0 r_j (n+m)})},\]
which is also an upper bound for the degrees of the polynomials $\widetilde{\vec{g}}_i$.

The result follows by choosing a universal constant $c$ such that $1+r 2^{c_0r(n+m)} \le 2^{cr(n+m)}$ for every $r> 0$.
\end{proof}

\bigskip

We introduce the following invariants associated to the chain of ideals $I_0\subset I_1\subset \cdots \subset I_\rho$ and the generators $\vec{g}_i$, $i=0, \ldots, \rho$, considered in Lemma \ref{gradogis}, that we will use in the next section.

\begin{definition}\label{lasep}
With the previous notations, for each $i=0,\ldots,\rho$, we define $\varepsilon_i$ as follows:
\[\varepsilon_0:=\min\{\varepsilon\in \N : I_0^{\varepsilon}\subseteq (\vec{g})\}, \]
\[\varepsilon_i:=\min\{\varepsilon\in \N : I_i^{\varepsilon}\subseteq (\dot{\vec{x}}-\vec{f}, \vec{g}_{i-1}, \dot{\vec{g}}_{i-1})\}\quad \textrm{for\ }\ i>0.\]
\end{definition}

Observe that by definition, $\varepsilon_i>0$ for all $i$. Moreover, they are well defined (i.e. finite) since $I_i$ is the radical of the ideal $\widetilde{I}_{i-1}\cap \C[\vec{x},\vec{u}]$ and $\widetilde{I}_{i-1}=(\vec{g}_{i-1},\widetilde{\vec{g}}_{i-1})\subseteq (\dot{\vec{x}}-\vec{f},\vec{g}_{i-1},\dot{\vec{g}}_{i-1})$ (see the final remark in Notation \ref{lastildes}). In fact, we can obtain upper bounds for these integer numbers in terms of the parameters of the input system (\ref{elsistema}):

\begin{proposition}\label{subengs} As in Lemma \ref{gradogis}, let $D:=\max\{\deg(\vec{f}), \deg(\vec{g}),\deg(V(\vec{g}))\}$. We have the inequalities:
\[\varepsilon_0\le D^{n+m}\qquad \textrm{and} \qquad \varepsilon_i\le ((n+m)D)^{2^{cir(n+m)}} \ \textrm{for}\ i=1,\ldots,\rho,\]
where $c>0$ is a universal constant.
\end{proposition}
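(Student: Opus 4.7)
The plan is to derive both bounds as direct applications of the effective strong Nullstellensatz (Proposition \ref{prop:exprad}), fed by the degree estimates from Lemma \ref{gradogis}. The $i=0$ case is essentially immediate; the $i\ge 1$ case requires first passing from $I_i$ to $\sqrt{\widetilde{I}_{i-1}}$, and then applying the Nullstellensatz in the larger polynomial ring $\C[\vec{x},\vec{u},\dot{\vec{u}}]$.

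For $\varepsilon_0$: since $I_0=\sqrt{(\vec{g})}$ and the generators $\vec{g}$ are polynomials of degree at most $D$ in the $n+m$ variables $\vec{x},\vec{u}$, Proposition \ref{prop:exprad} gives $I_0^{D^{n+m}}\subseteq (\vec{g})$, whence $\varepsilon_0\le D^{n+m}$.

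For $i\ge 1$: the key observation is that the operation of taking the radical commutes with contraction to a subring, so
$$I_i=\sqrt{\widetilde{I}_{i-1}\cap \C[\vec{x},\vec{u}]}=\sqrt{\widetilde{I}_{i-1}}\cap \C[\vec{x},\vec{u}]\subseteq \sqrt{\widetilde{I}_{i-1}}.$$
By Lemma \ref{gradogis} applied with index $i-1$, the polynomials $\vec{g}_{i-1},\widetilde{\vec{g}}_{i-1}$ generate $\widetilde{I}_{i-1}$ and their degrees are bounded by $\delta:=((n+m)D)^{2^{c_0 i\nu(n+m)}}$, where $c_0$ is the constant from that lemma. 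Since the ambient ring $\C[\vec{x},\vec{u},\dot{\vec{u}}]$ has $n+2m$ variables, a second application of Proposition \ref{prop:exprad} yields $(\sqrt{\widetilde{I}_{i-1}})^{\delta^{n+2m}}\subseteq \widetilde{I}_{i-1}$. Combining this with the containment $\widetilde{I}_{i-1}\subseteq (\dot{\vec{x}}-\vec{f},\vec{g}_{i-1},\dot{\vec{g}}_{i-1})$ recorded at the end of Notation \ref{lastildes} gives $I_i^{\delta^{n+2m}}\subseteq (\dot{\vec{x}}-\vec{f},\vec{g}_{i-1},\dot{\vec{g}}_{i-1})$, so $\varepsilon_i\le \delta^{n+2m}$.

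It remains to rewrite this in the form stated. First, $i\ge 1$ implies $\rho\ge 1$, which forces $\dim V(I_0)\ge 1$, so $r\ge 1$ and $\nu=r$. Second, the crude estimate $n+2m\le 2^{n+m+1}$ lets us absorb the factor $n+2m$ into the double exponent: $\delta^{n+2m}\le ((n+m)D)^{2^{c_0 ir(n+m)}\cdot 2^{n+m+1}}\le ((n+m)D)^{2^{cir(n+m)}}$ for any universal constant $c>c_0+1$. The only point of care is this exponent bookkeeping, but I don't expect it to pose a genuine obstacle since the bound from Lemma \ref{gradogis} is already in the required form up to a constant adjustment.
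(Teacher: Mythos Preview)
Your proof is correct and follows essentially the same route as the paper: both arguments handle $\varepsilon_0$ by a direct application of Proposition \ref{prop:exprad}, and for $i\ge 1$ both observe that $I_i\subseteq \sqrt{(\vec{g}_{i-1},\widetilde{\vec{g}}_{i-1})}$, apply Proposition \ref{prop:exprad} in the $(n+2m)$-variable ring using the degree bound from Lemma \ref{gradogis} at index $i-1$, and then pass to $(\dot{\vec{x}}-\vec{f},\vec{g}_{i-1},\dot{\vec{g}}_{i-1})$ via the containment noted in Notation \ref{lastildes}. The only quibble is in the final constant: from $(c-c_0)\,ir(n+m)\ge n+m+1$ with $i=r=1$ you need $c\ge c_0+2$ rather than merely $c>c_0+1$, but as you yourself note this is harmless bookkeeping.
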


\begin{proof}
If $i=0$ the bound follows from Proposition \ref{prop:exprad} applied to  $I=(\vec{g})$ and $I_0=\sqrt{I}$ in the polynomial ring $\C[\vec{x},\vec{u}]$: we have that $I_0^{{\deg(\vec{g})^{n+m}}}\!\!\!\!\!\subset (\vec{g})$ and then, $\varepsilon_0\le D^{n+m}$.

Now, fix an index $i=1,\ldots,\rho$. From Definition \ref{lasgis},
it follows that $I_i$ is contained in
$\sqrt{(\vec{g}_{i-1},\widetilde{\vec{g}}_{i-1})}\subset
\C[\vec{x},\vec{u},\dot{\vec{u}}]$. Thus, applying Proposition
\ref{prop:exprad} to the ideal
$I=(\vec{g}_{i-1},\widetilde{\vec{g}}_{i-1})$, with the estimations
of Lemma  \ref{gradogis}, we conclude that $I_i^\varepsilon \subset
(\vec{g}_{i-1},\widetilde{\vec{g}}_{i-1})$ for
$\varepsilon:=(((n+m)D)^{2^{cir(n+m)}})^{n+2m}=((n+m)D)^{(n+2m)2^{cir(n+m)}}$,
since in this case $r>0$ and then $\nu=r$.

The proposition follows from the fact that
$(\vec{g}_{i-1},\widetilde{\vec{g}}_{i-1})\subset
(\dot{\vec{x}}-\vec{f}, \vec{g}_{i-1},\dot{\vec{g}}_{i-1})$ changing
the constant $c$ by another one $c'$ such that the inequality
$(n+2m)2^{cir(n+m)}\le 2^{c'ir(n+m)}$ holds for any $n,m$.
\end{proof}

\subsection{Going Back to the Original System} \label{back}

We follow the notations and keep the hypotheses of the previous
section.

The aim of this section is to prove the main Theorem
\ref{elteorema}, roughly speaking, an upper bound for the number of
derivations needed to obtain a representation of $1$ as an element
of the differential ideal $[\dot{\vec{x}}-\vec{f}, \vec{g}]$
introduced in formula (\ref{elsistema}).

Let $\vec{g}_i$, $i=0,\ldots,\rho$, be the polynomials in
$\C[\vec{x},\vec{u}]$ introduced in Lemma \ref{gradogis}. From
Proposition \ref{con0dim}, the differential Nullstellensatz asserts
that $1\in [\dot{\vec{x}}-\vec{f}, \vec{g}_i]$ for
$i=0,\ldots,\rho$. Thus for each $i$ there exists a non negative
integer $k$ (depending on $i$) such that
$1\in((\dot{\vec{x}}-\vec{f})^{[k]}, \vec{g}_i^{[k]})\subseteq
\C[\vec{x}^{[k+1]},\vec{u}^{[k]}]$.

For $i=0,\ldots,\rho$, we define:
\begin{equation}\label{defki}
 k_i:=\min\{k\in \N_0 : 1\in((\dot{\vec{x}}-\vec{f})^{[k]},
\vec{g}_i^{[k]})\}. \end{equation}
Observe that, since
$(\vec{g}_i)=I_i\subset I_{i+1}=(\vec{g}_{i+1})$,  the sequence
$k_i$ is decreasing and that Proposition \ref{con0dim} ensures that
$k_\rho\le 1$.

The following key lemma allows us to bound recursively each
$k_{i-1}$ in terms of $k_{i}$ with the help of the sequence
$\varepsilon _i$ introduced in Definition \ref{lasep}:

\begin{lemma}\label{cotaks} Suppose that the finite sequence $k_i$ introduced in
(\ref{defki}) is not identically zero and let $\mu:=\max \{ 0\le
i\le \rho : k_i\ne 0\}$. Then:
\begin{enumerate}
\item the inequality $k_{i-1}\le 1+\varepsilon_i k_i$ holds for every $1\le i\le \mu$.
\item $k_{\mu}=1$ and $k_i=0$ for every $i>\mu$.
\item $\d{k_0\le (\mu+1)\prod_{i=1}^{\mu} \varepsilon_i}$.
\end{enumerate}
\end{lemma}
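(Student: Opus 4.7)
The plan is to reduce the lemma to an analysis in the $\delta$-ring $R := \C[\vec{x}, \vec{u}, \dot{\vec{u}}, \ddot{\vec{u}}, \ldots]$ equipped with the derivation $\delta$ defined by $\delta(\vec{x}) = \vec{f}$ and $\delta(\vec{u}^{(j)}) = \vec{u}^{(j+1)}$. Writing $\vec{g}_i^{\{j\}} := \delta^j \vec{g}_i$, so that $\vec{g}_i^{\{0\}} = \vec{g}_i$ and $\vec{g}_i^{\{1\}} = \widetilde{\vec{g}}_i$, the quotient of $\C[\vec{x}^{[k+1]}, \vec{u}^{[k]}]$ by the ideal $((\dot{\vec{x}}-\vec{f})^{[k]})$ is isomorphic to $\C[\vec{x}, \vec{u}^{[k]}]$ via $\vec{x}^{(j)} \mapsto \delta^{j-1}\vec{f}$, and under this identification each $\vec{g}_i^{(j)}$ is sent to $\vec{g}_i^{\{j\}}$. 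Hence $k_i$ is the minimal $k$ with $1 \in (\vec{g}_i^{\{0\}}, \ldots, \vec{g}_i^{\{k\}})$ in $\C[\vec{x}, \vec{u}^{[k]}]$, while the definition of $\varepsilon_i$ becomes $I_i^{\varepsilon_i} \subseteq (\vec{g}_{i-1}^{\{0\}}, \vec{g}_{i-1}^{\{1\}})$ in $R$.

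For Part 1, set $k := k_i$ and start from an identity $1 = \sum_{j=0}^{k} a_j \vec{g}_i^{\{j\}}$. Raising to the $\varepsilon_i$-th power writes $1$ as a polynomial combination of monomials $\prod_{s=1}^{\varepsilon_i} \vec{g}_i^{\{j_s\}}$ with each $j_s \le k$ and total order $\sum_s j_s \le \varepsilon_i k$. The key input is that, for any product $h$ of $\varepsilon_i$ generators of $I_i$ and any $N \le \varepsilon_i k$,
\[
\delta^{N}(h) \in T := (\vec{g}_{i-1}^{\{0\}}, \ldots, \vec{g}_{i-1}^{\{1 + \varepsilon_i k\}}),
\]
and $\delta^N(h)$ is, by Leibniz, a symmetric sum of $\varepsilon_i$-fold products of the $\vec{g}_i^{\{\cdot\}}$'s. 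The main technical obstacle is to disentangle each such symmetric sum into its individual monomial summands without leaving $T$. This is accomplished by bootstrapping: one first deduces $\vec{g}_i^{\,p} \in T$ for $p$ descending from $\varepsilon_i$ down to $1$, by multiplying the identity $1 = \sum a_j \vec{g}_i^{\{j\}}$ by suitable powers of $\vec{g}_i$, and then peels off the middle Leibniz terms from the extremal ones by combining the relations $\delta^N(h) \in T$ with the identity multiplied by appropriate monomials in $\vec{g}_i^{\{\cdot\}}$. The extra $+1$ in the bound $1 + \varepsilon_i k$ arises from the first-order derivative of $\vec{g}_{i-1}$ already present in $\widetilde{I}_{i-1}$.

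Parts 2 and 3 are then direct. For Part 2, $k_i = 0$ for $i > \mu$ holds by definition of $\mu$. For $k_\mu = 1$: if $\mu = \rho$, Proposition~\ref{con0dim} gives $1 \in (\dot{\vec{x}}-\vec{f}, \vec{g}_\rho, \dot{\vec{g}}_\rho) \subseteq ((\dot{\vec{x}}-\vec{f})^{[1]}, \vec{g}_\rho^{[1]})$ and hence $k_\rho \le 1$; if $\mu < \rho$, Part 1 applied with $i = \mu+1$ gives $k_\mu \le 1 + \varepsilon_{\mu+1}\cdot k_{\mu+1} = 1$; combined with $k_\mu \ne 0$, this forces $k_\mu = 1$. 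For Part 3, iterate Part 1:
\[
k_0 \,\le\, 1 + \varepsilon_1 k_1 \,\le\, 1 + \varepsilon_1 + \varepsilon_1\varepsilon_2 k_2 \,\le\, \cdots \,\le\, \sum_{s=0}^{\mu}\prod_{j=1}^{s}\varepsilon_j,
\]
using $k_\mu = 1$ at the last step. Since each $\varepsilon_j \ge 1$, each of the $\mu+1$ partial products is bounded by $\prod_{j=1}^{\mu}\varepsilon_j$, so $k_0 \le (\mu+1)\prod_{j=1}^{\mu}\varepsilon_j$.
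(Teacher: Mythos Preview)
Your reformulation via the $\delta$-ring $R$ is sound, and Parts 2 and 3 are handled correctly and essentially as in the paper. The gap is in Part~1.

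You correctly identify that raising the identity $1=\sum_{j=0}^{k}a_j\,\vec{g}_i^{\{j\}}$ to the $\varepsilon_i$-th power produces a sum of monomials $\prod_s \vec{g}_i^{\{j_s\}}$, and that the obstacle is to place each such monomial in $T$. But your ``bootstrapping'' does not achieve this. Concretely, take a single generator $g$ with $\varepsilon_i=2$ and $k=2$. To get $g\in T$ by multiplying the identity by $g$ you would need $g\,\delta^2 g\in T$; however, from $\delta^2(g^2)\in T$ you only obtain $(\delta g)^2 + g\,\delta^2 g\in T$, and neither summand by itself is forced into $T$. So the descent from $\vec{g}_i^{\,\varepsilon_i}$ to $\vec{g}_i$ stalls as soon as $k\ge 2$, and your plan to ``peel off the middle Leibniz terms from the extremal ones'' is not specified enough to repair it. In fact the claim that every individual monomial $\prod_s \vec{g}_i^{\{j_s\}}$ lies in $T$ is stronger than what is needed and is not what the paper proves.

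The paper sidesteps this obstacle with a different, cleaner idea. Instead of working with the expanded power of the identity, it fixes a single $g\in I_i$ and, for each $j\le k_i$, differentiates $g^{\varepsilon_i}$ exactly $j\varepsilon_i$ times. In the Leibniz expansion the only multi-index with all parts $\ge j$ is $(j,\ldots,j)$, so
\[
\delta^{\,j\varepsilon_i}(g^{\varepsilon_i})\;=\;\frac{(j\varepsilon_i)!}{(j!)^{\varepsilon_i}}\,(\delta^{\,j}g)^{\varepsilon_i}\;+\;\sum_{l=0}^{j-1}p_{j,l}\,\delta^{\,l}g\;\in\;T,
\]
and hence $(\delta^{\,j}g)^{\varepsilon_i}\in T+(\delta^{\,0}g,\ldots,\delta^{\,j-1}g)$. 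One then argues pointwise (equivalently, via nilpotence in $R/T$): any common zero of $T$ is, by induction on $j$, a zero of $\delta^{\,0}g,\ldots,\delta^{\,k_i}g$ for every $g\in I_i$, contradicting the definition of $k_i$; by the weak Nullstellensatz, $1\in T$. This triangular structure---isolating $(\delta^{\,j}g)^{\varepsilon_i}$ modulo \emph{strictly lower} derivatives of $g$---is the missing ingredient in your proposal.
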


\begin{proof}
Obviously, $k_i=0$ for all $i> \mu$ from the definition of $\mu$.

Consider now $1\le i\le \mu+1$. From Definition \ref{lasep}, any
$g\in I_i=(\vec{g}_i)$ satisfies
\begin{equation} \label{powerg}
g^{\varepsilon_i}\in (\dot{\vec{x}}-\vec{f}, \vec{g}_{i-1},
\dot{\vec{g}}_{i-1}).\end{equation}

For any $j\ge 1$, if we differentiate $j\varepsilon_i$ times the
polynomial $g^{\varepsilon_i}$, by means of  Leibniz's Formula,  we
have
\[(g^{\varepsilon_i})^{(j\varepsilon_i)}=\sum_{r_1+r_2+\ldots r_{\varepsilon_i}=j\varepsilon_i}{j\varepsilon_i\choose r_1\ldots r_{\varepsilon_i}}\, g^{(r_1)}\dots g^{(r_{\varepsilon_i})},\] where $r_1, \ldots , r_{\varepsilon_i}$ are nonnegative integers and ${j\varepsilon_i\choose r_1\ldots r_{\varepsilon_i}}:=\dfrac{(j\varepsilon_i)!}{r_1!\ldots r_{\varepsilon_i}!}$.
In the formula above, the term with $r_1=\ldots
=r_{\varepsilon_i}=j$ equals
 $\dfrac{(j\varepsilon_i)!}{(j!)^{\varepsilon_i}}(g^{(j)})^{\varepsilon_i}$ and,
since the sum of all the
 $r_l$, with $l=1, \ldots , \varepsilon_i$, must be
 $j\varepsilon_i$,
  in all the other terms there is  at least one $g^{(r_l)}$ with $r_l<j$. Thus, for every $j=0, \ldots, k_i$, there exist polynomials $p_{j,0}, \ldots, p_{j, j-1}$,  such that the equality \[(g^{\varepsilon_i})^{(j\varepsilon_i)}=\dfrac{(j\varepsilon_i)!}{(j!)^{\varepsilon_i}}
  \ ({g}^{(j)})^{\epsilon_i}+\sum_{l=0}^{j-1}p_{j,l}\,  g^{(l)}\]
 holds;
moreover, from (\ref{powerg}), by differentiating $j\varepsilon_i$
times we deduce that
\[\dfrac{(j\varepsilon_i)!}{(j!)^{\varepsilon_i}}
  \ ({g}^{(j)})^{\varepsilon_i}+\sum_{l=0}^{j-1}p_{j,l}\,  g^{(l)}\in ((\dot{\vec{x}}-\vec{f})^{[j\varepsilon_i]}, \vec{g}_{i-1}^{[1+j\varepsilon_i]}).\]

From these relations we deduce recursively that
a common zero of the polynomials
$(\dot{\vec{x}}-\vec{f})^{[k_i\varepsilon_i]}$ and $
\vec{g}_{i-1}^{[1+k_i\varepsilon_i]}$ is also a zero of the
polynomials $g, \dot{g}, \ldots, g^{(k_i)}$ for every $g\in I_i=
(\vec{g}_i)$; in particular, it is a common zero of
$(\dot{\vec{x}}-\vec{f})^{[k_i]}, \vec{g}_{i}^{[k_i]}$ (recall that,
by definition, $\varepsilon_i$ is at least $1$ and then
$k_i\varepsilon_i\ge k_i$ for all $i$). This contradicts the
definition of $k_i$ (see (\ref{defki})).

Therefore, $1\in ((\dot{\vec{x}}-\vec{f})^{[k_i\varepsilon_i]},
\vec{g}_{i-1}^{[1+k_i\varepsilon_i]})$ and then, $k_{i-1}\le
1+k_i\varepsilon_i$, which proves the first part
of the lemma.

In particular, if $\mu<\rho$, taking $i=\mu+1$ in the previous
inequality, we have $0<k_{\mu}\le 1+k_{\mu+1}\varepsilon_{\mu+1}=1$,
hence $k_{\mu}=1$. In the case $\mu=\rho$, Proposition \ref{con0dim}
implies that $k_\rho\le 1$ and then, since $k_\mu$ is assumed
nonzero, we have $k_\mu=k_\rho=1$. This proves the second assertion.

Finally, the last statement is an easy consequence of the previous
ones: it follows from the fact that $k_\mu=1$,
applying recursively the inequality $k_{i-1}\le 1+ k_i\varepsilon_i
$ for $i=\mu, \mu-1,\dots, 1$.
\end{proof}\\

We are now ready to prove Theorem \ref{elteorema} stated at the beginning of this subsection as a corollary of the previous lemma and its proof:\\

\noindent {\bf{Proof of Theorem \ref{elteorema}.}} We must estimate
an upper bound for the minimum integer $L$ such that
\[1\in ((\dot{\vec{x}}-\vec{f})^{[L]},
\vec{g}^{[L]}).\]
By Definition \ref{lasep} the inclusion
$(\vec{g}_0)^{\varepsilon_0}\subseteq (\vec{g})$ holds. We can
repeat the same argument as in the proof of Lemma \ref{cotaks} and
prove that, for any $g\in (\vec{g}_0)=I_0=\sqrt{(\vec{g})}$ and for
$j=0, \ldots ,k_0$, there exist polynomials $p_{j,0},\ldots, p_{j,
j-1}$ such that
\[(g^{\varepsilon_0})^{(j\varepsilon_0)}=\dfrac{(j\varepsilon_0)!}{(j!)^{\varepsilon_0}}
  \ ({g}^{(j)})^{\varepsilon_0}+\sum_{l=0}^{j-1}p_{j,l}\,
g^{(l)}\in (\vec{g}^{[j\varepsilon_0]}).\] As before, this implies
that a common zero of the polynomials
$(\dot{\vec{x}}-\vec{f})^{[ k_0\varepsilon_0]}$ and $\vec{g}^{[
k_0\varepsilon_0]}$ is also a zero of  $g, \ldots, g^{(k_0)}$ for an
arbitrary element $g\in (\vec{g}_0)$; in particular, taking into
account that $\varepsilon_0\ge 1$, it follows that it is a common
zero of $(\dot{\vec{x}}-\vec{f})^{[k_0]}, \vec{g}_{0}^{[k_0]}$,
contradicting the definition of $k_0$ (see (\ref{defki})). We
conclude then that
\[1 \in ((\dot{\vec{x}}-\vec{f})^{[ k_0\varepsilon_0]}, \vec{g}^{[ k_0\varepsilon_0]}).\]
Thus, the inequality $L\le  k_0\varepsilon_0 $ holds.

If $r=0$, from Proposition \ref{dimension 0}, we have that $k_0=1$
and then, from Proposition \ref{subengs},  $L\le\varepsilon_0\le
D^{n+m}$. On the other hand, if $r>0$, from Proposition
\ref{subengs} and Lemma \ref{cotaks}, taking into account that
$\mu\le r$, we have that there is a constant $c$ such that

\[\begin{array}{cl}L\le  k_0\varepsilon_0 &\le  D^{n+m} (\mu+1) \prod_{i=1}^{\mu}
((n+m)D)^{2^{c i r(n+m)}}=\\[2mm]
& = (\mu+1) ((n+m)D)^{\sum_{i=0}^{\mu}2^{cir(n+m)}}\le  \\[2mm]
&\le (\mu+1)((n+m)D)^{2^{1+c\mu r(n+m)}}\le \\[2mm]
&\le (r+1)((n+m)D)^{2^{1+c r^2(n+m)}}.\end{array}\] The desired
bound follows taking a new constant $c'$, in such a way that the
inequality  $(r+1)((n+m)D)^{2^{1+c r^2(n+m)}}\le
((n+m)D)^{2^{c'r^2(n+m)}}$ holds.
Finally, since $\nu=\max\{1, r\}$, it is easy to see that $D^{n+m}$
and $((n+m)D)^{2^{c'r^2(n+m)}}$ are both bounded by
 $((n+m)D)^{2^{c{''} \nu^2(n+m)}}$ for a suitable universal constant $c''>0$.

The converse  is obvious. $\square$\\

As we have observed in the previous proof, if we assume $r=0$
(i.e.~the algebraic constraint $\vec{g}=0$ of the DAE system
(\ref{elsistema}) defines a $0$-dimensional algebraic variety in
$\C^{n+m}$), the constant $L$ can be bounded directly by
$\varepsilon_0$. This estimation is optimal for this kind of DAE
systems as it is shown by the following example extracted from
\cite[Example 5]{GKOS}:

\begin{example}
In the DAE system (\ref{elsistema}) suppose $n=1$, $\vec{x}=x_1$, $\vec{u}=u_1,\ldots,u_m$, $\vec{f}=1$,  and $\vec{g}(\vec{x},\vec{u})=u_m-x_1^2, u_{m-1}-u_m^2, \ldots, u_1-u_2^2,u_1^2$.

\noindent As it is shown in \cite[Example
5]{GKOS}, this system has no solutions and $1\in
((\dot{\vec{x}}-\vec{f})^{[L]},\vec{g}^{[L]})$ if and only if $L\ge
2^{m+1}$. On the other hand, the inequality $\varepsilon_0\le
2^{m+1}$ holds from Proposition \ref{prop:exprad}. Hence
$\varepsilon_0=2^{m+1}$ and the upper bound is reached.
\end{example}

\section{The general case} \label{generalcase}

The well-known method of reducing the order of a system of
differential equations will enable us to apply the results of the
previous Section to the general case.

Let $\vec{x}=x_1, \ldots, x_n$ and  $\vec{f}=f_1, \ldots, f_s $ be
differential  polynomials in $\C[\vec{x}, \ldots, \vec{x}^{(e)}]$,
with $e\ge 1$. We  consider the differential system
\begin{equation}\label{elsistemaordene}
{\mathbf{\left\{\begin{array}{rcl}f_1(\vec{x}, \ldots,
\vec{x}^{(e)})&=&0\\&\vdots &\\ f_s(\vec{x}, \ldots,
\vec{x}^{(e)})&=&0\end{array}\right.}}\end{equation}

Theorem \ref{elteorema} yields the following:

\begin{theorem}Let $\vec{x}=x_1, \ldots, x_n$ and  $\vec{f}=f_1, \ldots, f_s $ be
differential polynomials in $\C[\vec{x}, \ldots,
\vec{x}^{(e)}]$. Let $V\subset \mathbb{C}^{n(e+1)}$ be the algebraic
variety defined by $\{\vec{f}=0\}$, and let 
$\nu:=\max\{1, \dim(V)\}$ and $D:=\max\{\deg({\vec{g}}), \deg(V)\}$.
Then
$$1\in [\vec{f}] \ \Longleftrightarrow \ 1\in (\vec{f}, \ldots,
\vec{f}^{(L)})$$ where $L\le (n(e+1)D)^{2^{c\nu^2 n(e+1)}}$ for
 a universal constant $c>0$.
\end{theorem}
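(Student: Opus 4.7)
The plan is to invoke Theorem \ref{elteorema} via the classical reduction of the $e$-th order system to a first-order semiexplicit one. First, for $0 \le j \le e-1$ and $1 \le i \le n$, I would introduce new differential indeterminates $y_{i,j}$, thought of as $x_i^{(j)}$, together with auxiliary differential indeterminates $u_i$ playing the role of $x_i^{(e)}$. Writing $\vec{y}_j := (y_{1,j}, \dots, y_{n,j})$ and $\vec{u} := (u_1, \dots, u_n)$, the associated first-order semiexplicit system in $\C\{\vec{y}_0, \dots, \vec{y}_{e-1}, \vec{u}\}$ reads
\[
\left\{
\begin{array}{rcl}
\dot{\vec{y}}_j - \vec{y}_{j+1} &=& 0, \quad 0 \le j \le e-2, \\
\dot{\vec{y}}_{e-1} - \vec{u} &=& 0, \\
\vec{f}(\vec{y}_0, \ldots, \vec{y}_{e-1}, \vec{u}) &=& 0,
\end{array}
\right.
\]
which fits into the framework of Theorem \ref{elteorema} with $ne$ variables of type ``$\vec{x}$'', $n$ variables of type ``$\vec{u}$'', a linear right-hand side (degree $1$), and an algebraic constraint defined by the original $\vec{f}$ of degree at most $D$ whose zero set is a relabelling of $V$. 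Hence the relevant invariants of the new system are $n+m = n(e+1)$, while $\nu$ and $D$ are unchanged.

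Next, I would establish the ideal-theoretic equivalence bridging the two systems: $1 \in [\vec{f}] \subset \C\{\vec{x}\}$ if and only if $1$ belongs to the differential ideal generated by the new system's polynomials in $\C\{\vec{y}_0, \dots, \vec{y}_{e-1}, \vec{u}\}$. This would come from the surjective differential ring homomorphism
\[
\varphi: \C\{\vec{y}_0, \dots, \vec{y}_{e-1}, \vec{u}\} \longrightarrow \C\{\vec{x}\}, \qquad y_{i,j} \mapsto x_i^{(j)},\ u_i \mapsto x_i^{(e)},
\]
whose kernel is precisely the differential ideal generated by $\dot{\vec{y}}_j - \vec{y}_{j+1}$ (for $0 \le j \le e-2$) together with $\dot{\vec{y}}_{e-1} - \vec{u}$, and which sends $\vec{f}(\vec{y}_0, \dots, \vec{u})$ to $\vec{f}(\vec{x}, \dots, \vec{x}^{(e)})$.

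Applying Theorem \ref{elteorema} to the new system then produces an integer $L \le (n(e+1) D)^{2^{c \nu^2 n(e+1)}}$ such that $1$ lies in the polynomial ideal generated by the first $L$ derivatives of the new generators. Pushing this relation forward via $\varphi$ (which commutes with the derivation), the contributions coming from the explicit differential part collapse to zero, while each $\vec{f}^{(l)}$ maps to the $l$-th derivative of $\vec{f}$ in $\C\{\vec{x}\}$. This yields the desired representation of $1$ in $(\vec{f}, \dot{\vec{f}}, \dots, \vec{f}^{(L)})$; the converse direction is immediate.

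The main bookkeeping point I expect to require care is the identification of $\ker \varphi$ with the differential ideal generated by the explicit part, since this is what makes the two consistency problems genuinely equivalent. A secondary concern is to verify cleanly that the numerical invariants $(n+m,\nu,D)$ of the new system really pass to $(n(e+1),\nu,D)$ without any hidden inflation -- in particular, that the algebraic constraint variety is honestly a relabelling of $V$ (and not a variety of larger degree or different dimension), so that substituting into the bound of Theorem \ref{elteorema} produces exactly the exponent $\nu^2 n(e+1)$ claimed.
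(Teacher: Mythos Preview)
Your proposal is correct and follows essentially the same approach as the paper: reduce to a first-order semiexplicit system via new variables standing for $x_i^{(j)}$, apply Theorem \ref{elteorema} with $n+m=n(e+1)$ and unchanged $\nu,D$, and then substitute back. The only cosmetic difference is that you phrase the back-substitution as pushing forward along an explicit differential ring homomorphism $\varphi$ with identified kernel, whereas the paper simply replaces $z_{i,j}^{(k)}$ by $x_i^{(j+k)}$; the content is the same.
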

\begin{proof}
As usual, the introduction of the new of variables
$z_{i,j}:=x_i^{(j)}$, for $i=1, \ldots, n$ and $j=0, \ldots, e$, and
$\vec{z}_j=z_{1,j}, \ldots, z_{n, j}$, allows us to transform the
implicit system (\ref{elsistemaordene}) into the following first
order system with a set of polynomial constraints:
\begin{equation}\label{elsistemaorden1}
{\mathbf{\left\{\begin{array}{rcl}\dot{\vec{z}}_{0}&=&\vec{z}_1
\\\vdots \\\dot{\vec{z}}_{e-1}&=&\vec{z}_e \\
\overline{\vec{f}}&=&0\end{array}\right.}}\end{equation} where
$\overline{\vec{f}}=f_1(\vec{z}_0, \ldots, \vec{z}_{e}),\ldots ,
f_s(\vec{z}_0, \ldots, \vec{z}_{e})$.
It is clear that $$1\in[\vec{f}] \iff 1\in
[\dot{\vec{z}}_{0}-\vec{z}_1 , \ldots ,
\dot{\vec{z}}_{e-1}-\vec{z}_e, \overline{\vec{f}}].$$

The differential part of the system (\ref{elsistemaorden1}) is given
by an ODE consisting of $ne$ equations in $n(e+1)$ variables and the
constraints are given by polynomials in $n(e+1)$ variables, that is,
$\overline{\vec{f}}\in \C[\vec{z}_0, \ldots, \vec{z}_{e}]$.
Then, Theorem
\ref{elteorema} applied to this system
implies that $$1\in [\dot{\vec{z}}_{0}-\vec{z}_1 , \ldots ,
\dot{\vec{z}}_{e-1}-\vec{z}_e,
\overline{\vec{f}}]\Longleftrightarrow 1\in
((\dot{\vec{z}}_{0}-\vec{z}_1)^{[L]} , \ldots ,
(\dot{\vec{z}}_{e-1}-\vec{z}_e)^{[L]},  \overline{\vec{f}}^{[L]})$$
where $L\le (n(e+1)D)^{2^{c\nu^2(n(e+1))}}$, with $c>0$  a universal
constant. Going back to the original variables, replacing
$z_{i,j}^{(k)}$ by $x_i^{(j+k)}$ for $i=1, \ldots, n$, $j=0, \ldots
e$ and $k=0, \ldots, L$,  we get that
\[1\in [\vec{f}]\Longleftrightarrow 1\in (\vec{f}, \ldots , \vec{f}^{(L)})\]
and the Theorem follows.
\end{proof}

\bigskip

Applying the trivial bound $\dim(V)\le n(e+1)$ and Bezout's inequality, which implies that $\deg(V)\le d^{n(e+1)}$, we
deduce the following purely syntactic upper bound for the order in
the differential Nullstellensatz:

\begin{corollary}\label{sintactico}
Let $\vec{f}\subset \C\{\vec{x}\}$ be a finite set of differential polynomials in the variables $\vec{x}=x_1,\ldots,x_n$, whose degrees and orders are bounded by $d$ and $e$ respectively.
Then,
\[1\in [\vec{f}]\Longleftrightarrow 1\in (\vec{f}, \ldots , \vec{f}^{(L)})\]
where $L\le (n(e+1)d)^{2^{c(n(e+1))^3}}$ for a universal constant $c>0$. $\square$
\end{corollary}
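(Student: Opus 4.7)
The plan is to obtain the purely syntactic bound by specializing the previous Theorem, which already gives $L \le (n(e+1)D)^{2^{c\nu^2 n(e+1)}}$ with $\nu = \max\{1, \dim V\}$ and $D = \max\{\deg(\vec{f}), \deg(V)\}$. Since the only non-syntactic quantities on the right are $\dim(V)$ and $\deg(V)$, it suffices to replace them by estimates that depend solely on $n$, $e$ and $d$.

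First, I would observe that $V \subset \C^{n(e+1)}$ is cut out in an ambient affine space of dimension $n(e+1)$, so trivially $\dim(V) \le n(e+1)$, and consequently $\nu \le n(e+1)$. Next, to bound $\deg(V)$, I would apply the B\'ezout type estimate of Proposition \ref{prop:bezbound}: starting from $V_0 := \C^{n(e+1)}$ (of degree $1$) and intersecting successively with the hypersurfaces $\{f_i=0\}$, each of degree at most $d$, we obtain
\[
\deg(V) \le d^{n(e+1)}.
\]
In particular $D = \max\{\deg(\vec{f}),\deg(V)\} \le d^{n(e+1)}$ (using $d \ge 1$).

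Plugging these estimates into the bound from the previous Theorem yields
\[
L \le \bigl(n(e+1)\, d^{n(e+1)}\bigr)^{2^{c(n(e+1))^2 \cdot n(e+1)}} = \bigl(n(e+1)\, d^{n(e+1)}\bigr)^{2^{c(n(e+1))^3}}.
\]
Using the crude inequality $n(e+1)\, d^{n(e+1)} \le (n(e+1)d)^{n(e+1)}$, the right-hand side is at most
\[
(n(e+1)d)^{\,n(e+1)\cdot 2^{c(n(e+1))^3}}.
\]

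The last step is to absorb the extra polynomial factor $n(e+1)$ sitting in front of $2^{c(n(e+1))^3}$ into the double exponent by enlarging the universal constant: since $n(e+1) \le 2^{n(e+1)}$, we have $n(e+1)\cdot 2^{c(n(e+1))^3} \le 2^{c'(n(e+1))^3}$ for a suitable universal $c' > 0$. This gives the desired bound $L \le (n(e+1)d)^{2^{c'(n(e+1))^3}}$. There is no real obstacle; the entire argument is a two-line specialization of the previous Theorem together with the trivial dimension bound and B\'ezout's inequality, followed by routine manipulation of exponents.
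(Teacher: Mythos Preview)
Your proposal is correct and follows exactly the approach the paper takes: the paper simply says ``Applying the trivial bound $\dim(V)\le n(e+1)$ and B\'ezout's inequality, which implies that $\deg(V)\le d^{n(e+1)}$'', and marks the corollary with a $\square$. You have merely spelled out the routine exponent manipulations that the paper leaves implicit.
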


Now, once the order is bounded, as a straightforward consequence of the classical effective Nullstellensatz (see for instance \cite[Theorem 1.1]{Jelonek05}), we can estimate the degrees of the polynomials involved in the representation of $1$ as an element of the ideal $[\vec{f}]$.

\begin{corollary}\label{losgrados} Let $\vec{f}=\{f_1,\ldots,f_s\}\subset \C\{\vec{x}\}$ be a finite set of differential polynomials in the variables $\vec{x}=x_1,\ldots,x_n$, whose degrees and orders are bounded by $d$ and $e$ respectively. Let $\epsilon:=\max\{2,e\}$. Then $1\in[\,\vec{f}\,]$ if, and only if, there exist polynomials $p_{ij}\in \C[\vec{x}^{[\epsilon +L]}]$
such that $\d{1=\sum_{i=1}^s\sum_{j=0}^L p_{ij}\, f_i^{(j)}}$, where
\[L\le (n\epsilon d)^{2^{c(n\epsilon)^3}}\qquad \textrm{and}\qquad   \deg(p_{ij}\,
f_i^{(j)})\le d^{(n\epsilon d)^{2^{c(n\epsilon)^3}}},\]
for a universal constant $c>0$.
\end{corollary}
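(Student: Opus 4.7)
The plan is to bootstrap Corollary \ref{sintactico} with the classical effective algebraic Nullstellensatz. First, I would invoke Corollary \ref{sintactico}: since $\epsilon=\max\{2,e\}$, an elementary check gives $e+1\le 2\epsilon$ in every case, so the bound $L\le (n(e+1)d)^{2^{c_0(n(e+1))^3}}$ supplied by that corollary can be rewritten, after absorbing the factor of $2$ into the universal exponent, as $L\le (n\epsilon d)^{2^{c_1(n\epsilon)^3}}$. This produces the ideal membership $1\in(\vec{f},\vec{f}^{(1)},\ldots,\vec{f}^{(L)})$ inside the polynomial ring $\C[\vec{x}^{[e+L]}]\subseteq \C[\vec{x}^{[\epsilon+L]}]$, so the question is reduced to a purely algebraic Nullstellensatz with effective degree bounds.

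Next I would observe that total differentiation preserves the total degree in the algebraically independent indeterminates $x_l^{(k)}$: the formula
$$f_i^{(j+1)}=\sum_{l,k}\frac{\partial f_i^{(j)}}{\partial x_l^{(k)}}\, x_l^{(k+1)}$$
shows that differentiation replaces a partial derivative of degree $\deg(f_i^{(j)})-1$ by a product of degree $\deg(f_i^{(j)})$. Hence $\deg(f_i^{(j)})\le d$ for every $j\ge 0$. In particular, as polynomials in the $N:=n(\epsilon+L+1)$ variables of the ambient ring $\C[\vec{x}^{[\epsilon+L]}]$, the generators $\{f_i^{(j)}\}_{1\le i\le s,\, 0\le j\le L}$ all have degree at most $d$.

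Now I would apply \cite[Theorem 1.1]{Jelonek05} to the trivial ideal $(f_i^{(j)})_{i,j}\subset\C[\vec{x}^{[\epsilon+L]}]$. This yields coefficients $p_{ij}\in\C[\vec{x}^{[\epsilon+L]}]$ satisfying $1=\sum_{i,j}p_{ij}\,f_i^{(j)}$ with
$$\deg(p_{ij}\,f_i^{(j)})\le d^{N}=d^{n(\epsilon+L+1)}.$$

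The final step is purely arithmetic: I need to check that $n(\epsilon+L+1)\le (n\epsilon d)^{2^{c(n\epsilon)^3}}$ for an appropriate universal constant $c$. Since $L$ dominates $\epsilon+1$, we have $n(\epsilon+L+1)\le 3nL\le 3n(n\epsilon d)^{2^{c_1(n\epsilon)^3}}\le (n\epsilon d)^{1+2^{c_1(n\epsilon)^3}}$, which is bounded by $(n\epsilon d)^{2^{c(n\epsilon)^3}}$ after slightly enlarging $c_1$ to $c$. The converse direction is trivial. There is no genuine mathematical obstacle here; the argument amounts to concatenating Corollary \ref{sintactico} with Jelonek's degree estimate, and the only delicate point is the routine bookkeeping needed to repackage the various asymptotic expressions into the single form announced in the statement.
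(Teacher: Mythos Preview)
Your proposal is correct and follows essentially the same route as the paper: invoke Corollary~\ref{sintactico}, observe that differentiation does not increase total degree, apply Jelonek's effective Nullstellensatz \cite[Theorem~1.1]{Jelonek05} in the polynomial ring with $N=n(\epsilon+L+1)$ variables, and then absorb the resulting arithmetic into the universal constant. The only differences are cosmetic bookkeeping details---the paper records Jelonek's bound as $2d^{N}$ rather than $d^{N}$ and treats the degenerate case $d=1$ separately (where one may take the $p_{ij}$ to be constants), points you gloss over but which do not affect the argument once the constant $c$ is suitably enlarged.
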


\begin{proof} This corollary is an immediate consequence of the result in
\cite[Theorem 1.1]{Jelonek05}) applied to the polynomials
$\vec{f}^{[L]}$ of Corollary \ref{sintactico}, once we notice that
all the polynomials $\vec{f}^{[L]}$ have degree bounded by $d$,
since differentiation does not increase the degree, and that
$N:=n(e+L+1)$ is the number of variables used.
Thus $\deg(p_{ij}f_i^{(j)})\le 2 d^{N}$, and we only need to change the constant $c$ for a new constant $c'>0$ such that  $(n(e+1)d)^{2^{c(n(e+1))^3}}\le (n\epsilon d)^{2^{c'(n\epsilon)^3}}$ and,  for $d\ge 2$,
$2d^{n(e+(n(e+1) d)^{2^{c(n(e+1))^3}}+1)}\le d^{(n\epsilon d)^{2^{c'(n\epsilon)^3}}}$. For $d=1$, we can take $p_{ij}\in \C$ and so, the degree upper bound also holds. \end{proof}

\bigskip

We remark that Corollary \ref{losgrados} allows us to construct an
algorithm which decides if an ordinary DAE system $\vec{f}=0$ over
$\C$ has a solution or not. It suffices to consider the coefficients
of the polynomials $p_{ij}$ as indeterminates  (finitely many, since
orders and degrees are bounded \textit{a priori}) and  obtain them
by solving
a non homogeneous linear system over $\C$.
It is easy to see that the complexity of this procedure becomes triply exponential
in the parameters $n$ and $e$. Another algorithm of the same hierarchy of
complexity (i.e.~triply exponential) can be deduced as a particular case of the
quantifier elimination method  of ordinary differential equations proposed by D. Grigoriev in \cite{Grigoriev87}.

\bigskip

In the usual way, we can deduce an effective strong differential Nullstellensatz from Corollary \ref{sintactico} and the well-known Rabinowitsch trick:


\begin{corollary} \label{nssstrong}
Let $\vec{f}\subset
\C\{\vec{x}\}$ be a finite set of differential polynomials in the
variables $\vec{x}=x_1,\ldots,x_n$. Suppose that
$f\in\C\{\vec{x}\}$ is a differential polynomial such that every
solution of the differential system $\vec{f}=0$ is a solution of the
differential equation ${f}=0$. Let $d:=\max\{\deg(\vec{f}),
\deg(f)\}$ and $\epsilon:=\max\{2, {\rm{ord}}(\vec{f}), {\rm{ord}}(f)\}.$ Then
$f^{M}\in (\vec{f}^{[L]})\subset [\,\vec{f}\,]$ where
$M=d^{n(\epsilon+L+1)}$ and $L\le (n\epsilon d)^{2^{c(n\epsilon)^3}}$ for a universal constant $c>0$.

\end{corollary}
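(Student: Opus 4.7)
The plan is to combine the weak differential Nullstellensatz of Corollary \ref{sintactico} with the Rabinowitsch trick, and then extract the power $M$ from the effective algebraic Nullstellensatz of Proposition \ref{prop:exprad}.

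I would first introduce a fresh differential indeterminate $y$ and form the enlarged system $\{\vec f,\,yf-1\}\subset\C\{\vec x,y\}$. Any differential solution of this system in a differentially closed extension would satisfy $\vec f=0$, hence $f=0$ by hypothesis, making $yf-1=-1\ne 0$; so the enlarged system is inconsistent, i.e.\ $1\in[\vec f,yf-1]$. Applying Corollary \ref{sintactico} to this system---which has $n+1$ differential variables, orders $\le\epsilon$, and degrees $\le d+1$---and absorbing the extra $+1$'s into a new universal constant, I obtain an integer
$$L\le (n\epsilon d)^{2^{c(n\epsilon)^3}}$$
with $1\in(\vec f^{[L]},\,(yf-1)^{[L]})$ in $\C[\vec x^{[\epsilon+L]},\,y^{[L]}]$.

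Next I would pass to the algebraic-geometric side. The displayed inclusion says precisely that the variety $V(\vec f^{[L]},(yf-1)^{[L]})\subset\C^{n(\epsilon+L+1)+(L+1)}$ is empty. This emptiness will force $V(\vec f^{[L]})\subset V(f)$ as subvarieties of $\C^{n(\epsilon+L+1)}$: for any $\vec x_0^{[\epsilon+L]}\in V(\vec f^{[L]})$ with $f(\vec x_0^{[\epsilon]})\ne 0$, I produce a point of the supposedly empty variety by solving the triangular equations $(yf-1)^{(k)}=0$ recursively for $y_0^{(k)}$, $k=0,\dots,L$. Indeed, a short induction via the Leibniz formula gives $(yf-1)^{(k)}=fy^{(k)}+R_k$ with $R_k$ depending only on $y,\dots,y^{(k-1)}$ and on derivatives of $\vec x$, and the leading coefficient $f$ is invertible at $\vec x_0^{[\epsilon]}$, so the triangular system is solvable.

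Finally, with $f$ vanishing set-theoretically on $V(\vec f^{[L]})$, Hilbert's Nullstellensatz gives $f\in\sqrt{(\vec f^{[L]})}$, and Proposition \ref{prop:exprad}---applied in $N:=n(\epsilon+L+1)$ variables to generators of degree $\le d$, since total differentiation preserves degree---yields $f^{d^N}\in(\vec f^{[L]})$, which is exactly $f^M\in(\vec f^{[L]})$ with $M=d^{n(\epsilon+L+1)}$. The only step that is more than bookkeeping is the recursive triangular solvability for $y_0$, which rests on the Leibniz identity just noted; all remaining estimates are absorbed into the universal constant $c$.
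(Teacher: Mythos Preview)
Your proposal is correct and follows essentially the same route as the paper: Rabinowitsch trick, Corollary \ref{sintactico} applied to the augmented system in $n+1$ variables (with the extra $+1$'s absorbed into the constant), and then Proposition \ref{prop:exprad} in $N=n(\epsilon+L+1)$ variables to extract the exponent $M$. The only cosmetic difference is in the passage from $1\in(\vec f^{[L]},(1-yf)^{[L]})$ to $f\in\sqrt{(\vec f^{[L]})}$: the paper substitutes $y^{(i)}\mapsto (f^{-1})^{(i)}$ directly into a polynomial identity and clears denominators, whereas you argue the equivalent geometric statement $V(\vec f^{[L]})\subset V(f)$ by recursively solving the triangular system for the $y^{(k)}$; these are two phrasings of the same computation.
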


\begin{proof}
We start with the Rabinowitsch trick: since every solution of the
system $\vec{f}=0$ is a solution of  $f=0$, if we introduce a new
differential variable $y$, the differential system $
\vec{f}=0\,,\, 1-yf=0$ has no solution. Hence, $1$ belongs to the
differential ideal $[\vec{f}, 1-yf]\subseteq \C\{\vec{x}, y\}$.

Therefore, Corollary \ref{sintactico} implies that $1$ belongs to the
polynomial ideal $(\vec{f}^{[L]}, (1-yf)^{[L]})$, with $L\le
((n+1)(\epsilon +1)(d+1))^{2^{c((n+1)(\epsilon +1))^3}}\le (n\epsilon d)^{2^{c'(n\epsilon)^3}}$, for suitable universal constants $c, c'>0$. Taking any
representation of $1$ as a linear combination of the generators
$\vec{f}^{[L]}, (1-yf)^{[L]}$ with polynomial coefficients and
replacing each variable $y^{(i)}$ by the corresponding
$(f^{-1})^{(i)}$ for $0\le i\le L$, we deduce that a suitable power
of $f$ belongs to the polynomial ideal $(\vec{f}^{[L]})$ or,
equivalently, $f\in \sqrt{(\vec{f}^{[L]})}$ in the polynomial ring
$\C[\vec{x}^{[\epsilon+L]}]$.

Now, applying \cite[Theorem 1.3]{Jelonek05} stated in our
Proposition \ref{prop:exprad}, we conclude that $f^{d^N}\in
(\vec{f}^{[L]})$ for $N:=n(\epsilon +L+1)$, the number of variables of the
ground polynomial ring $\C[\vec{x}^{[\epsilon +L]}]$, and the corollary is
proved.
\end{proof}

\bigskip

With the same notation and assumptions as in Corollary \ref{nssstrong},
we can obtain upper bounds for the degrees of the polynomials involved in
a representation of a power of $f$ as an element of the differential ideal $[\vec{f}]$. Applying \cite[Corollary 1.7]{Kollar88}, we have that, if $\vec{f}=f_1,\dots, f_s$ are differential polynomials
of \emph{degrees at least $3$}, there are polynomials $p_{ij}\in \C[\vec{x}^{[\epsilon +L]}]$ such that $\d f^{M} =\sum_{i=1}^s\sum_{j=0}^L p_{ij}\, f_i^{(j)}$ with $\deg(p_{ij}\, f_i^{(j)}) \le (1+d) M$, where $M= d^{n(\epsilon +L+1)}$.
For differential polynomials with \emph{arbitrary degrees}, following the proof of the first part of Corollary \ref{nssstrong} and taking into account the bounds in Corollary \ref{losgrados}, we get a representation $\d f^{\widetilde M} = \sum_{i=1}^s\sum_{j=0}^L \widetilde p_{ij}\, f_i^{(j)}$, where $\widetilde M = 2 (L+1) d^{n(\epsilon +L+1)+1}\le d^{(n\epsilon d)^{2^{c(n\epsilon)^3}}} $, with $\deg (\widetilde p_{ij})\le d^{(n\epsilon d)^{2^{c(n\epsilon)^3}}} $ for a suitable universal constant $c>0$.

\section{The case of an arbitrary field of constants} \label{general}

In the previous sections the assumption of taking the complex numbers as the ground field is only essential in the proof of Lemma \ref{bajodim} because the Implicit Function Theorem is applied. Even if the statement of this lemma makes sense in any field, we are not able to prove it in the more general case of any field of constants.
However, it is not difficult to prove that if $K$ is an arbitrary field of characteristic $0$ (with the trivial derivation) the following analogue of Theorem \ref{elteorema} remains true for $K$. More precisely:

\begin{theorem} \label{elteoremaK}
Let $K$ be an arbitrary field of characteristic $0$ with the trivial derivation, $\vec{x}=x_1, \ldots, x_n$ and $\vec{u}=u_1, \ldots, u_m$ differential variables over $K$,  $\vec{f}=f_1, \ldots, f_n $ and $\vec{g}=g_1, \ldots, g_s $ polynomials in $K[\vec{x}, \vec{u}]$. If $d>0$ is an upper bound for the degrees of $\vec{f}$ and $\vec{g}$ we have:  \[1\in [\dot{\vec{x}}-\vec{f}, \vec{g}]\subseteq K\{\vec{x},\vec{u}\}\quad \Longleftrightarrow \quad 1\in (\dot{\vec{x}}-\vec{f},\ldots ,  \vec{x}^{(L+1)}-\vec{f}^{(L)}, \vec{g}, \ldots , \vec{g}^{(L)}),\]
where $L\le ((n+m)d)^{2^{c(n+m)^3}}$ for a suitable universal constant $c>0$.
\end{theorem}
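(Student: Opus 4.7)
The plan is to reduce the statement to the already proven complex case (Theorem \ref{elteorema}) by a standard field-theoretic descent argument, and then upgrade the bound to a purely syntactic one by applying the B\'ezout-type estimate of Proposition \ref{prop:bezbound}. The converse implication is immediate from the definition of the differential ideal, so the entire work lies in the ``only if'' direction.

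First, I would single out the finitely generated subfield $K_0\subset K$ (over $\Q$) generated by the coefficients of $\vec{f}$ and $\vec{g}$. Since $K_0$ is a finitely generated extension of $\Q$ it is countable, and therefore admits a field embedding $\iota:K_0\hookrightarrow\C$ (considered with the trivial derivation on both sides). Both the differential ideal membership $1\in[\dot{\vec{x}}-\vec{f},\vec{g}]$ in $K\{\vec{x},\vec{u}\}$ and the algebraic ideal membership to be proved are finite $K_0$-linear combination statements involving the same generators: hence, by faithful flatness of the field extensions $K_0\subset K$ and $K_0\subset\C$, each of them is equivalent to its counterpart over $K_0$, and therefore to its counterpart over $\C$ (after applying $\iota$ to the coefficients).

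Once this reduction is carried out, I would invoke Theorem \ref{elteorema} over $\C$ applied to the image system $\dot{\vec{x}}-\iota(\vec{f})=0,\ \iota(\vec{g})=0$. This yields the algebraic membership with order of derivation bounded by $L\le((n+m)D)^{2^{c\nu^{2}(n+m)}}$, where $D=\max\{\deg(\vec{f}),\deg(\vec{g}),\deg(V(\iota(\vec{g})))\}$ and $\nu=\max\{1,\dim V(\iota(\vec{g}))\}$. Finally, descending the resulting polynomial identity from $\C$ back to $K_0$ (again by faithful flatness applied to the finite-dimensional $K_0$-vector space of coefficient tuples of bounded degree) produces the desired representation in $K[\vec{x}^{[L+1]},\vec{u}^{[L]}]$.

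To conclude, I would replace $D$ and $\nu$ by purely syntactic bounds: the B\'ezout inequality of Proposition \ref{prop:bezbound} gives $\deg(V(\iota(\vec{g})))\le d^{n+m}$, so $D\le d^{n+m}$, while trivially $\nu\le n+m$. Substituting, the exponent becomes $\nu^{2}(n+m)\le(n+m)^{3}$ and the base becomes at most $(n+m)d^{n+m}\le((n+m)d)^{n+m}$, whence
\[
L\le\bigl((n+m)d\bigr)^{(n+m)\cdot 2^{c(n+m)^{3}}}\le\bigl((n+m)d\bigr)^{2^{c'(n+m)^{3}}}
\]
after absorbing the factor $n+m$ into the double exponential with a slightly larger universal constant $c'>0$. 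The only delicate point I anticipate is the bookkeeping of the faithful-flatness descents; they are routine but must be stated cleanly, since the statement intermixes a differential ideal membership (which descends via $K\{\vec{x},\vec{u}\}\hookrightarrow\C\{\vec{x},\vec{u}\}$) with a purely polynomial one (which descends via an inclusion of polynomial rings in finitely many variables). Everything else is a mechanical translation of Theorem \ref{elteorema}.
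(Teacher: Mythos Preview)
Your proposal is correct and follows essentially the same route as the paper: descend to a finitely generated subfield of $K$, embed it into $\C$, invoke Theorem \ref{elteorema} there, and then use the B\'ezout bound $\deg(V)\le d^{n+m}$ together with $\nu\le n+m$ to make the constant purely syntactic. The only cosmetic difference is that the paper takes the subfield generated by the coefficients of an actual representation of $1$ (so the first descent is tautological and the second is handled by observing that the polynomial identity is a linear system with coefficients in that subfield), whereas you take $K_0$ generated only by the coefficients of $\vec{f},\vec{g}$ and invoke faithful flatness for both transfers; either variant works.
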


\begin{proof}
We prove only the non trivial implication. Suppose that
\begin{equation*} \label{en K}
1\in [\dot{\vec{x}}-\vec{f}, \vec{g}]\subseteq K\{\vec{x},\vec{u}\}
\end{equation*}
holds.
Let $\vec{k}\subseteq K$ be the subfield of $K$ which is generated over $\Q$ by the coefficients of the finitely many polynomials involved in a decomposition of $1$ as an element of the ideal $[\dot{\vec{x}}-\vec{f}, \vec{g}]$.
Obviously, we have $1\in [\dot{\vec{x}}-\vec{f}, \vec{g}]$ in the differential ring $\vec{k}\{\vec{x},\vec{u}\}$.

Since $\vec{k}$ is a finitely generated extension of $\Q$ and $\C$ is algebraically closed having infinite transcendence degree over $\Q$, there exists a field $\Q$-embedding  $\sigma: \vec{k}\to \C$. This morphism can be extended to an embedding of the ring $\vec{k}\{\vec{x}, \vec{u}\}$ into $\C\{\vec{x}, \vec{u}\}$, simply by applying $\sigma$ to the coefficients of the polynomials. Moreover, by considering $\vec{k}$ and $\C$ as fields of constants, $\sigma$ defines a  monomorphism of differential rings. In particular, the condition $1\in [\dot{\vec{x}}-\vec{f}, \vec{g}]\subseteq \vec{k}\{\vec{x},\vec{u}\}$ implies that $1\in [\dot{\vec{x}}-\sigma(\vec{f}), \sigma(\vec{g})]\subseteq \sigma(\vec{k})\{\vec{x},\vec{u}\}\subseteq \C\{\vec{x},\vec{u}\}$.

Now, applying Theorem \ref{elteorema} to the polynomials $\sigma(\vec{f}), \sigma(\vec{g})$ we conclude that the relation
\begin{equation} \label{en K2}
1\in  (\dot{\vec{x}}-\sigma(\vec{f}),\ldots ,  \vec{x}^{(L+1)}-\sigma(\vec{f})^{(L)}, \sigma(\vec{g}), \ldots , \sigma(\vec{g})^{(L)})
\end{equation}
holds in an algebraic polynomial ring $\C[\vec{x}^{[L+1]},\vec{u}^{[L]}]$ with $L\le ((n+m)D)^{2^{c \nu^2(n+m)}}$, where $D$ is the degree of the variety $V\subset \C^{n+m}$ defined by $\sigma(\vec{g})$ and $\nu:=\max\{1, \dim(V)\}$. Using B\'ezout's inequality we may replace $D$ by $d^{n+m}$ and then, changing the constant $c$ if necessary, we may suppose $L\le ((n+m)d)^{2^{c(n+m)^3}}$. Moreover, we can also replace $\C$ by the subfield $\sigma(\vec{k})$, because the coefficients of the polynomial combination underlying (\ref{en K2}) may be chosen as solutions of a linear system with coefficients in that field. The theorem follows by taking $\sigma^{-1}$ in that polynomial relation.
\end{proof}

\bigskip

Since all the statements in Section \ref{generalcase} follow by straightforward manipulations of the equations plus the bound stated in Theorem \ref{elteorema}, we can replace $\C$ by any arbitrary base field of constants $K$ of characteristic $0$ and the results of that section remain true.

\bigskip

\noindent \textbf{Acknowledgments.} The authors thank Michael Wibmer (RWTH Aachen University) for pointing out a way of extending the results from the field of complex numbers to an arbitrary field of constants of characteristic zero.


\begin{thebibliography}{00}

\bibitem{BS91} Berenstein, C.; Struppa, D. Recent Improvements in the Complexity of the Effective Nullstellensatz. Linear Algebra Appl. 157 (1991), 203--215.


\bibitem{Cohn} Cohn, R. On the analogue for differential equations of the Hilbert-Netto theorem  Bull. Amer. Math. Soc. Volume 47, Number 4 (1941), 268--270.

\bibitem{Dube90} Dub\'e, T.
The structure of polynomial ideals and Gr\"obner bases.
SIAM J. Comput. 19, no. 4 (1990), 750--775.

\bibitem{Giusti84}
Giusti, M.
 Some effectivity problems in polynomial ideal theory. EUROSAM 84 (Cambridge, 1984),
Lecture Notes in Comput. Sci., 174, Springer, Berlin (1984), 159--171.

\bibitem{Grigoriev87} Grigoriev, D. Complexity of quantifier elimination in the theory of ordinary differential equations. EUROCAL '87 (Leipzig, 1987), Lecture Notes in Comput. Sci., 378, Springer, Berlin (1989),  11--25.

\bibitem{GKOS} Golubitsky, O.;  Kondratieva, M.;   Ovchinnikov, A.;  Szanto, A. A bound for orders in differential Nullstellensatz. Journal of Algebra 322, no. 11 (2009), 3852--3877

\bibitem{Heintz83} Heintz, J. Definability and fast quantifier elimination in algebraically closed fields.
Theoret. Comput. Sci. 24, no. 3 (1983), 239--277.

\bibitem{HS82}  Heintz, J.; Schnorr, C.
 Testing polynomials which are easy to compute. Logic and algorithmic (Zurich, 1980),
Monograph. Enseign. Math., 30, Univ. Gen\`eve (1982), 237--254.

\bibitem{GH} Hermann, G. Die Frage der endlich vielen Schritte in der Theorie der Polynomideale. Math.
Ann. 95 (1926), 736--788.

\bibitem{Jelonek05} Jelonek, Z. On the effective Nullstellensatz. Invent. Math. 162, no. 1 (2005), 1--17.

\bibitem{Johnson02} Johnson, W. The Curious History of Fa\`a di Bruno's Formula. The Amer. Math. Monthly, Vol. 109, No. 3 (2002), 217--234

\bibitem{Kolchin} Kolchin, E. Differential Algebra and Algebraic Groups. Academic Press, NewYork, (1973).


\bibitem{Kollar88} Koll\'ar, J. Sharp effective Nullstellensatz. J. Amer. Math. Soc. 1 (1988), no. 4, 963--975.

\bibitem{KL91a} Krick, T.; Logar, A.
 Membership problem, representation problem and the computation of the radical for one-dimensional ideals. Effective methods in algebraic geometry (Castiglioncello, 1990), Progr. Math., 94, Birkhäuser Boston, Boston, MA (1991),  203--216.

\bibitem{KL91b} Krick, T.; Logar, A.
 An algorithm for the computation of the radical of an ideal in the ring of polynomials. Applied algebra, algebraic algorithms and error-correcting codes (New Orleans, LA, 1991),
Lecture Notes in Comput. Sci., 539, Springer, Berlin (1991), 195--205.

\bibitem{Kronecker1882} Kronecker, L. Grundz\"uge einer arithmetischen Theorie der algebraischen Gr\"ossen. J. Reine Angew. Math. 92 (1882), 1--123.

\bibitem{Laplagne06} Laplagne, S.
 An algorithm for the computation of the radical of an ideal. ISSAC 2006, ACM, New York (2006),  191--195.

\bibitem{Matsumura80} Matsumura, H. Commutative Algebra. Second Edition. The Benjamin/Cummings Publ. Company (1980).

\bibitem{MM84} M\"oller, M.; Mora, F.
 Upper and lower bounds for the degree of Groebner bases. EUROSAM 84 (Cambridge, 1984),
Lecture Notes in Comput. Sci., 174, Springer, Berlin (1984), 172--83.

\bibitem{Raud34} Raudenbush, H. Ideal theory and algebraic differential equations. Trans. Amer. Math. Soc. vol. 36 (1934), 361--368.

\bibitem{Ritt32}
Ritt, J.  Differential equations from the algebraic standpoint. Amer. Math. Soc. Colloq. Publ., Vol XIV, New York (1932).

\bibitem{Ritt50} Ritt, J. Differential Algebra. Amer. Math. Soc. Coll. Publ. Vol. 33, New York (1950).


\bibitem{Seid52} Seidenberg, A. Some basic theorems in differential algebra (characteristic $p$
arbitrary). Trans. Amer. Math. Soc. 73 (1952), 174--190 .

\bibitem{Seid56} Seidenberg, A. An elimination theory for differential algebra. Univ. Calif. Publ. Math. (New
Series) 3 (1956), 31--65 .

\end{thebibliography}
\end{document}